\newtheorem{theorem}{Theorem}[section]
\theoremstyle{definition}
\theoremstyle{remark}
\newtheorem{remark}[theorem]{Remark}
\theoremstyle{plain}
\newtheorem{thm}{Theorem}
\newtheorem{lem}[thm]{Lemma}
\newtheorem{prop}[thm]{Proposition}
\newtheorem{cor}[thm]{Corollary}
\newtheorem{defi}[thm]{Definition}
\theoremstyle{definition}
\newtheorem{exa}[thm]{Example}
\newcommand{%
    \import{./images/}{.pdf_tex}
}[1]{%
    \import{./images/}{#1.pdf_tex}
}
\title{A bound on the equivariant unknotting number}
\author{Sarah Zampa}
\DeclareMathOperator{\sign}{sign}
\DeclareMathOperator{\rank}{rank}
\def\sigmaeq{\widetilde{\sigma}}
\def\ueq{\widetilde{u}}
\def\ueqa{\widetilde{u}_A}
\def\ueqb{\widetilde{u}_B}
\def\ueqc{\widetilde{u}_C}
\DeclareMathOperator{\Fix}{Fix}
\begin{document}

\begin{abstract}
We study how the equivariant signature of strongly invertible knots changes when one of the Boyle-Chen equivariant unknotting moves is applied.
%We prove that the absolute value of the equivariant signature is bounded above by three times the type A equivariant unknotting number, by twice the type C equivariant unknotting number for some $(1,2)$-knots, and, for admissible diagrams, by twice the type B equivariant unknotting number.
It follows form our results that the absolute value of the equivariant signature introduced by Alfieri-Boyle gives a lower bound to three times the equivariant unknotting number.
%These results provide analogues in the equivariant setting of the classical bound relating the knot signature to the unknotting number.
\end{abstract}
{{\maketitle}}

\vspace{-2.5mm}

\section{Introduction}

A symmetric knot $(K,\rho)$ is a knot together with an orientation-preserving homeomorphism $\rho : S^3\rightarrow S^3$ which maps $K$ to itself. We say that $K$ is a \emph{strongly invertible knot} if it is a symmetric knot such that the involution $\rho$ reverses the orientation on $K$. In this case, the fixed point set of the involution intersects $K$ in two points, and there is an axis of symmetry which we will denote by $S$.
There has been a recent interest in strongly invertible knots, through their connections with $4$-manifold topology and strong corks (see \cite{DMS23} and references therein). There are still many questions about strongly invertible knots from a purely knot theoretical viewpoint.

Recently, in \cite{BC24} Boyle and Chen considered an equivariant version of the unknotting number. This is defined looking at   regular equivariant homotopies from $K$ to the unknot. The \emph{total equivariant unknotting number} of a strongly invertible knot $K$, denoted by $\ueq(K)$, is then defined as the  minimum number of transverse self-intersections appearing in any such homotopy. These are partitioned into three categories: 
\begin{itemize}
    \item type A: when two transverse double points happen away from the axis of symmetry, and in such a way that they are interchanged by the involution $\rho$;
    \item type B: when there is one  transverse double point lying on the axis;
    \item type C: the equivariant homotopy contracts a sub-arc of the axis of symmetry, hence creating two new crossings on the projection of the knot.  
\end{itemize}
See Figure \ref{fig:movetypes} for a pictorial depiction of the various cases.
\begin{figure}[ht!]
    \def\svgwidth{0,5\columnwidth}
    \centering
    %
    %% Creator: Inkscape 1.2.2 (732a01da63, 2022-12-09), www.inkscape.org
%% PDF/EPS/PS + LaTeX output extension by Johan Engelen, 2010
%% Accompanies image file 'movetypes.pdf' (pdf, eps, ps)
%%
%% To include the image in your LaTeX document, write
%%   \input{<filename>.pdf_tex}
%%  instead of
%%   \includegraphics{<filename>.pdf}
%% To scale the image, write
%%   \def\svgwidth{<desired width>}
%%   \input{<filename>.pdf_tex}
%%  instead of
%%   \includegraphics[width=<desired width>]{<filename>.pdf}
%%
%% Images with a different path to the parent latex file can
%% be accessed with the `import' package (which may need to be
%% installed) using
%%   \usepackage{import}
%% in the preamble, and then including the image with
%%   \import{<path to file>}{<filename>.pdf_tex}
%% Alternatively, one can specify
%%   \graphicspath{{<path to file>/}}
%% 
%% For more information, please see info/svg-inkscape on CTAN:
%%   http://tug.ctan.org/tex-archive/info/svg-inkscape
%%
\begingroup%
  \makeatletter%
  \providecommand\color[2][]{%
    \errmessage{(Inkscape) Color is used for the text in Inkscape, but the package 'color.sty' is not loaded}%
    \renewcommand\color[2][]{}%
  }%
  \providecommand\transparent[1]{%
    \errmessage{(Inkscape) Transparency is used (non-zero) for the text in Inkscape, but the package 'transparent.sty' is not loaded}%
    \renewcommand\transparent[1]{}%
  }%
  \providecommand\rotatebox[2]{#2}%
  \newcommand*\fsize{\dimexpr\f@size pt\relax}%
  \newcommand*\lineheight[1]{\fontsize{\fsize}{#1\fsize}\selectfont}%
  \ifx\svgwidth\undefined%
    \setlength{\unitlength}{562.22942222bp}%
    \ifx\svgscale\undefined%
      \relax%
    \else%
      \setlength{\unitlength}{\unitlength * \real{\svgscale}}%
    \fi%
  \else%
    \setlength{\unitlength}{\svgwidth}%
  \fi%
  \global\let\svgwidth\undefined%
  \global\let\svgscale\undefined%
  \makeatother%
  \begin{picture}(1,1.14915256)%
    \lineheight{1}%
    \setlength\tabcolsep{0pt}%
    \put(0,0){\includegraphics[width=\unitlength,page=1]{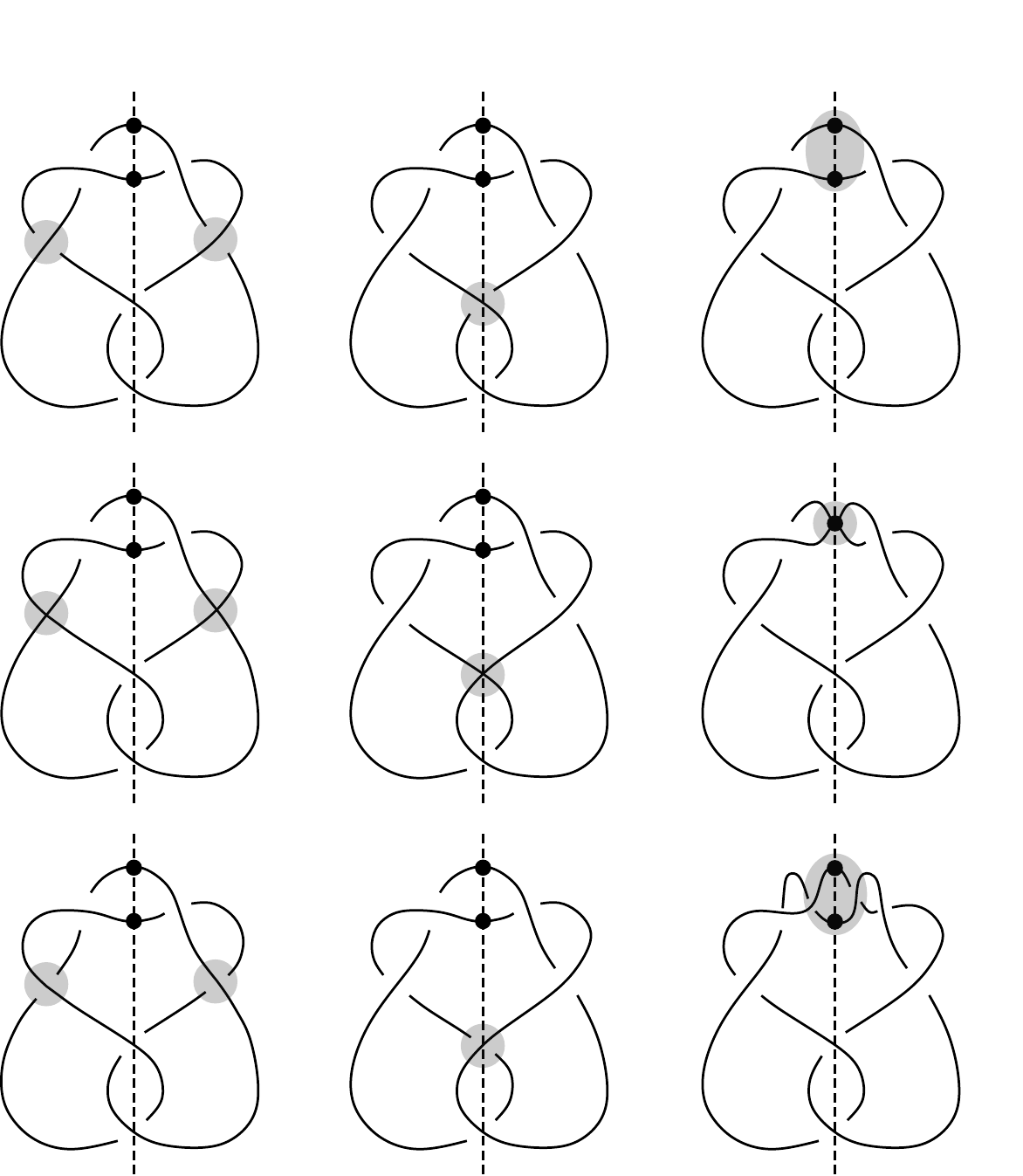}}%
    \put(0.10577866,1.10621122){\makebox(0,0)[lt]{\lineheight{1.25}\smash{\begin{tabular}[t]{l}\LARGE $A$\end{tabular}}}}%
    \put(0.44727642,1.10621122){\makebox(0,0)[lt]{\lineheight{1.25}\smash{\begin{tabular}[t]{l}\LARGE $B$\end{tabular}}}}%
    \put(0.78877424,1.10621122){\makebox(0,0)[lt]{\lineheight{1.25}\smash{\begin{tabular}[t]{l}\LARGE $C$\end{tabular}}}}%
  \end{picture}%
\endgroup%

    \caption{Examples of type A, B and C moves applied to $6_1$; we can see that $\ueqa(6_1)=\ueqb(6_1)=1$.}
    \label{fig:movetypes}
\end{figure}
Note that by considering only one type of move, one can also define the type X unknotting number $\ueq_X$, for $\text{X}\in\{\text{A},\text{B},\text{C}\}$. A knot may fail to be unknotted using only type X moves, in which case we set $\ueq_X(K)=\infty$.

In the case of the classical unknotting number, one can show that the signature satisfies the well-known bound $\lvert\sigma(K)\rvert\leq2u(K)$ (first shown in \cite{M65}). Here, we will focus on \emph{directed} strongly invertible knots:
if $S$ denotes the axis of symmetry of a strongly invertible knot $K$, the set $S\setminus K=h\cup h'$ splits into two arcs $h$ and $h'$. A \emph{direction} of $K$ is then a choice of one of the two arcs.

In \cite{AB24}, Alfieri and Boyle introduce an invariant $\sigmaeq(K)$ of directed strongly invertible knots called the \emph{equivariant signature}, based on the $G$-signature formula of Atiyah-Singer \cite{AS68}. In this paper, we show that for each unknotting type, a similar type of bound as mentioned in the previous paragraph, occurs for the equivariant case.

\begin{thm}
    Let $K$ be a directed strongly invertible knot, and consider a homotopy from $K$ to the unknot which never crosses the direction. Then, $\lvert\sigmaeq(K)\rvert/3$ gives a lower bound to the number of transverse self-intersections.
\end{thm}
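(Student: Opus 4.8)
The plan is to reduce the statement to a local one about a single equivariant move, and then telescope along the homotopy. By genericity, an equivariant homotopy from $K$ to the unknot $U$ is, away from finitely many singular times, an equivariant ambient isotopy, and at each singular time it performs one move of type A (two transverse double points off the axis, exchanged by $\rho$), of type B (one transverse double point on the axis), or of type C. The hypothesis that the homotopy never crosses the direction forces every singular time to be of type A or B, so, writing $n_A$ and $n_B$ for their numbers, the total number of transverse self-intersections is $n=2n_A+n_B$. The core claim is that a move of type A or of type B changes $\sigmaeq$ by at most $3$. Granting this and using $\sigmaeq(U)=0$ together with the triangle inequality along the sequence of moves,
\[
|\sigmaeq(K)|=|\sigmaeq(K)-\sigmaeq(U)|\le 3(n_A+n_B)\le 3(2n_A+n_B)=3n,
\]
which is exactly the assertion that $|\sigmaeq(K)|/3\le n$. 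Tracking the constants more carefully, move by move, gives the sharper, move-specific bounds of the abstract: $3$ for type A, $2$ for type B on admissible diagrams, $2$ for type C on suitable $(1,2)$-knots.

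For the core claim I would pass to four dimensions. If $K'$ is obtained from $K$ by one move, then the trace of the move is a $\rho$-equivariant immersed annulus in $S^3\times[0,1]$ from $K$ to $K'$ whose only singularities are the double points of the move: a pair exchanged by $\rho$ in the type A case, a single $\rho$-fixed one in the type B case. Resolving these equivariantly — by a pair of tubes interchanged by $\rho$, respectively by one $\rho$-invariant tube — produces an embedded, $\rho$-invariant cobordism $\Sigma$ from $K$ to $K'$. Using the construction of $\sigmaeq$ via the Atiyah--Singer $G$-signature formula, the difference $\sigmaeq(K)-\sigmaeq(K')$ can be computed from the $G$-signature of a closed $4$-manifold carrying a $\mathbb{Z}/2$-action, assembled from the equivariant cobordism $\Sigma$ together with the $4$-dimensional models defining $\sigmaeq(K)$ and $\sigmaeq(K')$, corrected by the $G$-signature defect (a Wall/Maslov-type term supported near the axis) of the gluings. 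Estimating the part of the $G$-signature contributed by $\Sigma$ in terms of the rank it adds to the equivariant intersection form (bounded in terms of the resolving tubes) and the defect term in terms of the fixed-/branch-locus data near the two points $K\cap S$ (which contributes at most $1$) yields $|\sigmaeq(K)-\sigmaeq(K')|\le 3$; the defect term is precisely what drops out, improving the bound to $2$, when one works with admissible diagrams.

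The main obstacle — and the reason the constant is $3$ rather than the classical $2$ — is the bookkeeping of this defect term: one must control simultaneously the change in the ambient equivariant intersection form and the change in the self-intersection of the fixed/branch locus in a neighbourhood of the two axis points, and rule out that these conspire to exceed $3$. Making this precise requires placing the resolving tubes and the capping disks into standard position relative to a tubular neighbourhood of the axis, and invoking the $G$-signature form of Wall non-additivity when stacking $\Sigma$ onto the model manifold. Once the per-move estimate is in hand, the telescoping above completes the proof, and both the admissibility hypothesis and the separate treatment of type C moves in the companion statements come out of retaining, rather than merely estimating, the defect term in each case.
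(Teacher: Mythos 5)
Your global strategy --- telescoping a per-move bound along the homotopy, with a type A move counted as two transverse self-intersections --- matches what the paper does implicitly, but your core per-move claim is false, and the route you propose toward it is both different from the paper's and not actually carried out. You assert that a single type A move changes $\sigmaeq$ by at most $3$; the correct bound, proved in Theorem \ref{mainthmA}, is $6$, and it is sharp: the paper's $9_{40}$ example has $\sigmaeq=6$ and becomes $\sigmaeq=0$ after one type A move. (For type A and B moves $\sigmaeq$ changes by an even integer --- the sizes of $M^{\pm}$ are unchanged, so $\sigma(M^+)-\sigma(M^-)$ jumps by an even amount, and $e(D)$ jumps by $0$ or $\pm4$ --- so a bound of $3$ would really be a bound of $2$, which is far from true.) Your final inequality is salvageable, since the correct chain is $\lvert\sigmaeq(K)\rvert\le 6n_A+2n_B\le 3(2n_A+n_B)$, but the lemma you set out to prove, and the sketch you give for it, target a wrong statement.

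The sketch itself is also not a proof. The paper establishes the per-move bounds entirely diagrammatically: it writes $\sigmaeq=\sigma(M^+)-\sigma(M^-)-e(D)$ via the Goeritz matrix, tracks $e(D)$ explicitly under each move (it can jump by $\pm4$ under a type A move, which is one of the two sources of the constant $6$ that your argument never sees), and controls $\sigma(M^{\pm})$ under the resulting one-entry or rank-one perturbations using Jones's $\sigma$-series theorem and Weyl's inequalities. Your four-dimensional $G$-signature/Wall-non-additivity argument leaves exactly the quantitative steps unproved: the rank added by the equivariant resolving tubes and the size of the defect term are asserted (``yields $\le 3$'') rather than estimated, and no mechanism in your setup accounts for the correction term's contribution. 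Finally, two reductions at the start need justification: that ``never crosses the direction'' excludes all type C singularities (a type C move contracts a sub-arc of the axis, which need not lie on the directed arc), and that the surviving type B moves are the \emph{directed} ones on admissible diagrams --- the only case in which the paper (Theorem \ref{mainthmB}) proves a type B bound, and a distinction the paper explicitly flags as essential. Your proposal does not address either point.
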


This paper is organized as follows: we will first consider double-points of type B as the proof is the most straightforward. For this type, there is a caveat due to the fact that the equivariant signature (like many other invariants of strongly invertible knots) is an invariant of \emph{directed} strongly invertible knots.
Indeed, a type B double point can either lie on the direction, in which case we call it a \emph{directed type B} move, or it lies on the other arc.

\begin{thm}\label{mainthmB}
    If $K$ and $K'$ are two directed strongly invertible knots which differ by a directed type B move, then $\sigmaeq(K)-2\leq\sigmaeq(K')\leq\sigmaeq(K)+2$.
\end{thm}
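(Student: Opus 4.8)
The strategy is to realise a directed type B move as an \emph{equivariant crossing change} and to track the effect of this local modification through the four-dimensional data computing $\sigmaeq$. The proof should parallel the classical argument that a crossing change alters $\sigma$ by at most $2$, carried out $\Z/2$-equivariantly.

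First I would recall the construction of the equivariant signature from \cite{AB24}: from a directed strongly invertible knot $(K,\rho)$ one produces a $\Z/2$-equivariant four-manifold $W_K$ (the double cover of $B^4$ branched along an equivariantly pushed-in equivariant spanning surface $F$ for $K$, equipped with the lift $g$ of $\rho$; equivalently an equivariant filling of the double branched cover of $(S^3,K)$), and sets $\sigmaeq(K)=\sign_g(W_K)$, the difference of the signatures of the $(\pm 1)$-eigenspaces of the $g$-action on $H_2(W_K;\mathbb{C})$. Independence of the auxiliary choices is part of the Alfieri--Boyle setup, and I would invoke it. The direction of $K$ enters precisely in the choice of the lift $g$ (equivalently, in which arc $h$ of $S\setminus K$ the equivariant spanning surface is built along), which is exactly why one must single out \emph{directed} type B moves.

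Second I would describe the local picture. A directed type B move changes a crossing of $K$ lying on the direction arc $h\subset S$; near that point $K$ consists of two strands interchanged by $\rho$, so the crossing change is realised by $(\pm 1)$-surgery on a small unknot $c$ invariant under $\rho$ (with $\rho|_c$ a reflection), bounding a $\rho$-invariant disk $D$ that meets $K$ transversely in two points. Using an equivariant isotopy to bring an equivariant spanning surface into standard form inside a $\rho$-invariant ball around $c$, one obtains equivariant spanning surfaces $F$ for $K$ and $F'$ for $K'$ that agree outside that ball and differ inside by a single full twist along $D$ — equivalently $F'$ is $F$ with one $\rho$-invariant tube attached. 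Because the double point sits on $h$, this modification is compatible with the directed structure and with the $\Z/2$-action, so the same $\sign_g$-recipe computes both $\sigmaeq(K)$ and $\sigmaeq(K')$. Consequently $H_1(F')$ differs from $H_1(F)$ by a $g$-invariant summand of rank at most $2$, and the symmetrized $G$-Seifert forms differ by a $g$-invariant symmetric extension of rank at most $2$ (equivalently, $W_{K'}$ is obtained from $W_K$ by an equivariant cobordism with second Betti number at most $2$). By Novikov additivity for the $g$-signature, $\sigmaeq(K')-\sigmaeq(K)$ equals the $g$-signature of this extension, and a rank-$\le 2$ symmetric form — on each of the two eigenspaces of $g$ — contributes at most $1$ in absolute value to each eigenspace signature, hence at most $2$ to the difference; this yields $\sigmaeq(K)-2\le\sigmaeq(K')\le\sigmaeq(K)+2$.

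The main obstacle is the second step: verifying carefully that the equivariant spanning surfaces for $K$ and $K'$ can be chosen to differ by \emph{exactly one} equivariant twisting region, respecting both the involution and the direction, so that the rank-$\le 2$ comparison is legitimate. This is an equivariant handle/isotopy argument in a neighbourhood of the axis, and it is here that the hypothesis that the double point lies on the direction $h$ rather than on $h'$ is essential — it is what pins down how $g$ acts on the two new homology classes and guarantees that the $g$-signature change is genuinely bounded by $2$ rather than being uncontrolled. I would also double-check the behaviour of the construction near $\Fix(\rho)$, since $c$ and $D$ meet the axis, so that the branched-cover description of $W_K$, $W_{K'}$ and the cobordism between them is valid.
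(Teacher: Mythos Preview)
Your approach is genuinely different from the paper's, and the underlying idea --- bound $\lvert\sigmaeq(K')-\sigmaeq(K)\rvert$ by the $g$-signature of an equivariant cobordism via Novikov additivity --- is sound in principle. But the proposal, as written, does not close.

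The central gap is in the step where you pass from ``the forms differ by a $g$-invariant piece of rank $\le 2$'' to ``$\lvert\Delta\sigmaeq\rvert\le 2$''. You describe this piece in three incompatible ways: as a full twist on the same surface (a \emph{perturbation} of the form on the same $H_1$), as an added $\rho$-invariant tube (an \emph{enlargement} of $H_1$ in which the new classes pair nontrivially with the old ones, so not a direct summand), and as a cobordism with $b_2\le 2$ (genuine Novikov additivity). Only the last of these gives $\lvert\sigma_g\rvert\le b_2\le 2$ directly; for a rank-$2$ \emph{perturbation} of a $g$-invariant form one only gets $\lvert\Delta\sigma_g\rvert\le 4$ in general, and your sentence ``contributes at most $1$ in absolute value to each eigenspace signature'' is not correct (a rank-$1$ restriction to an eigenspace can change that signature by $2$, and you have not argued that each eigenspace sees rank $\le 1$). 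So you must actually build the equivariant $2$-handle cobordism between the branched double covers, check that its $b_2$ is $2$, check that the lift of $\rho$ extends over it, and then invoke equivariant Novikov additivity --- none of which you have done. You also never address the correction term $e(D)$; in the paper's formulation $\sigmaeq=\sigma(M^+)-\sigma(M^-)-e(D)$, so unless you verify that your four-manifold invariant already absorbs $e(D)$, you must show separately that $e(D)$ is unchanged by a type~B move.

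For comparison, the paper avoids all of this by working diagrammatically with the Goeritz form. A type~B crossing sits between a white region $a_n$ and its image $a_n'=\rho(a_n)$, so the Goeritz matrix changes by $\pm 2\,(e_{a_n}-e_{a_n'})(e_{a_n}-e_{a_n'})^T$, a rank-$1$ perturbation whose direction $e_{a_n}-e_{a_n'}$ lies in the $+1$-eigenspace. Hence $M^-_B=M^-$ exactly, while $M^+_B$ differs from $M^+$ in a single diagonal entry; a $\sigma$-series argument (Jones' theorem) then gives $\lvert\sigma(M^+)-\sigma(M^+_B)\rvert\in\{0,2\}$. Since a type~B crossing lies on the axis it does not contribute to $e(D)$, so $e(D)$ is unchanged and the bound follows. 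This is precisely where the ``directed'' hypothesis enters concretely: the two regions meeting at the crossing are exchanged by $\rho$, which forces the perturbation into a single eigenspace. Your sketch gestures at this but never isolates it, and without it the rank bookkeeping you propose does not yield the bound~$2$.
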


It is still an open question whether every strongly invertible knot can be unknotted using only type B moves. This question is closely related to Nakanishi's $4$-move conjecture \cite{NS87}, which appears in Kirby's list of problems (see \cite[1.59(3)(a)]{K97}). Indeed, a positive answer would imply a positive resolution of the $4$-move conjecture (see \cite[Corollary 5.3]{BC24}).

\begin{cor}
    For a regular equivariant homotopy from a directed strongly invertible knot $K$ to the unknot where there are only directed type B self-intersections, the number of critical points is bounded below by $\lvert\sigmaeq(K)\rvert/2$.
\end{cor}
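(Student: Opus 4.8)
The plan is to read this off directly from Theorem \ref{mainthmB} together with the vanishing of the equivariant signature of the unknot. Starting from a regular equivariant homotopy from $K$ to the unknot whose only self-intersections are directed type B moves, I would first put the homotopy in a generic position equivariantly, so that its singular times are isolated and each corresponds to a single directed type B move. This presents the homotopy as a finite chain $K = K_0, K_1, \dots, K_n = U$ of directed strongly invertible knots, with $K_i$ and $K_{i+1}$ related by one directed type B move and $U$ the unknot. The point to check here is that every $K_i$ is again a \emph{directed} strongly invertible knot: the involution is carried along by the equivariant homotopy, and since the double point of a directed type B move sits on the chosen direction arc, the direction is inherited by $K_{i+1}$, so Theorem \ref{mainthmB} applies to each consecutive pair.

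With this in place, the estimate is a telescoping argument. Using $\sigmaeq(U) = 0$ (the unknot with either direction is the trivial directed strongly invertible knot, whose equivariant signature vanishes) and Theorem \ref{mainthmB} for each step, I get
\[
\lvert\sigmaeq(K)\rvert \;=\; \lvert\sigmaeq(K_0) - \sigmaeq(K_n)\rvert \;\le\; \sum_{i=0}^{n-1} \bigl\lvert\sigmaeq(K_i) - \sigmaeq(K_{i+1})\bigr\rvert \;\le\; 2n.
\]
Since the number of critical points of the homotopy equals the number $n$ of directed type B moves it performs, this rearranges to $n \ge \lvert\sigmaeq(K)\rvert/2$, which is the claimed bound.

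All the real work is in Theorem \ref{mainthmB}; here the only things needing attention are the reduction to a finite sequence of elementary moves through directed strongly invertible knots — in particular arranging genericity in an equivariant way and tracking the direction along the homotopy — and the normalization $\sigmaeq(\text{unknot}) = 0$. I do not anticipate any genuine obstacle: no further signature computation is required, and the argument is the equivariant analogue of the standard deduction of $\lvert\sigma(K)\rvert \le 2u(K)$ from the additivity estimate for a single crossing change.
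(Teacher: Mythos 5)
Your proposal is correct and is exactly the (implicit) deduction the paper intends: the corollary is stated as an immediate consequence of Theorem \ref{mainthmB}, obtained by decomposing the homotopy into a finite sequence of single directed type B moves, telescoping the per-move bound $\lvert\sigmaeq(K_i)-\sigmaeq(K_{i+1})\rvert\le 2$, and using $\sigmaeq(U)=0$.
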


Considering now type C moves, we have the following result.

\begin{thm}\label{mainthmC}
    If $K$ and $K'$ are two directed strongly invertible knots which differ by a type C move, then $\sigmaeq(K)-2\leq\sigmaeq(K')\leq\sigmaeq(K)+2$.
\end{thm}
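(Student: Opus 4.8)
The plan is to transplant the classical mechanism behind $\lvert\sigma(K)\rvert\le 2u(K)$ — that changing one crossing alters a Seifert-type form by a block of rank at most one — into the equivariant setting, so that a single type C move alters the Hermitian form computing $\sigmaeq$ by a summand of signature at most $2$ in absolute value. First I would fix a local model: a type C move is supported in a ball $B$ meeting the axis $S$ in one arc, and it is cleanest to view it in the quotient orbifold $S^3/\rho\cong S^3$ with branch locus $\bar S$, in which the strongly invertible knot becomes a tangle $\bar K$; the move is then a homotopy that, inside $\bar B$, pushes one sub-strand of $\bar K$ across the branch locus $\bar S$, which introduces (upstairs, equivariantly) exactly two new crossings interchanged by $\rho$ and sitting symmetrically about $S$ — a single clasp straddling the axis.

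Next I would track this through the invariant. Starting from a $\rho$-equivariant, direction-compatible spanning surface $F$ for $K$ of the type used by Alfieri--Boyle in \cite{AB24}, the local picture lets one build an equivariant spanning surface $F'$ for $K'$ obtained from $F$ by a single equivariant ambient $1$-surgery: attaching one band $\beta$ straddling $S$, or one band together with its translate $\rho(\beta)$ — the point being that a clasp straddling the axis is absorbed by exactly one such equivariant band. Pushing $F,F'$ into $B^4$ and taking $\mathbb{Z}/2$-branched double covers $W,W'$ (so that $\sigmaeq$ is, in the normalization of \cite{AB24}, the $G$-signature $\sign(H_2(W)^{+})-\sign(H_2(W)^{-})$ corrected by the $G$-signature defect along $\Fix(\tau)$, with $\tau$ the lift of $\rho$), the passage from $F$ to $F'$ corresponds to attaching to $W$ a single $2$-handle along a curve $c$ that is either $\tau$-invariant or forms an interchanged pair $\{c,\tau c\}$. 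In the first case the new homology is one $\tau$-line lying in a single eigenspace; in the second it contributes one line to each of the $(+1)$- and $(-1)$-eigenspaces. Either way each of $\sign(H_2(W)^{\pm})$ changes by at most $1$, and, checking that the defect terms change by a correspondingly controlled amount, $\sigmaeq$ changes by at most $2$; this is the asserted $\sigmaeq(K)-2\le\sigmaeq(K')\le\sigmaeq(K)+2$. Equivalently, one can stay in $S^3$ and observe that a type C move adds or deletes one symmetric block of rank at most $1$ (or a hyperbolic $2$-dimensional block) in an equivariant Seifert/Goeritz matrix, then invoke a Murasugi--Trotter-type computation.

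The main obstacle is the middle step: I must identify precisely which $\tau$-equivariant handle — equivalently, which elementary change of the equivariant Seifert or Goeritz form — a type C move produces, and verify in particular that the homology it creates or destroys is never two independent classes in the same eigenspace, since that is exactly what forces the constant to be $2$ rather than $4$. Here the special geometry of a type C move (a clasp straddling the fixed axis, rather than the two genuinely independent off-axis crossing changes of a type A move, which is why the type A bound is $3$) is doing the work. I would also keep Theorem \ref{mainthmB} in reserve: a type C move run in slow motion passes through a singular configuration supported on the axis, so it is closely related to ``the other resolution'' of a type B double point, and if the branched-cover bookkeeping with its defect terms turns out to be delicate, I would instead derive the bound from Theorem \ref{mainthmB} together with an isotopy argument.
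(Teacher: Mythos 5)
Your outline correctly identifies the linear-algebra skeleton that the paper also uses: a type C move adds one new white region together with its mirror image, hence enlarges each of the eigenspace forms $E_+$ and $E_-$ by exactly one dimension, so by interlacing (or by Jones's $\sigma$-series formula, which is what the paper invokes) each of $\sigma(M^+)$, $\sigma(M^-)$ jumps by exactly $\pm1$ and their difference changes by $0$ or $\pm2$. In particular the case you single out as the main obstacle --- two new independent classes landing in the \emph{same} eigenspace --- never occurs, so that is not where the difficulty lies. The genuine gap is the step you defer with ``checking that the defect terms change by a correspondingly controlled amount.'' The correction term $e(D)$ itself changes by $\pm2$ when the two new crossings are bicolored, so the separate estimates $\lvert\Delta(\sigma(M^+)-\sigma(M^-))\rvert\le 2$ and $\lvert\Delta e(D)\rvert\le 2$ only give $\lvert\Delta\sigmaeq\rvert\le 4$. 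To get the constant $2$ one must prove that both changes are \emph{one-sided}, with signs that cancel rather than add. This is the real content of the paper's proof: for a positive type C move, resolving the two new crossings into a split link produces a matrix $M_0$ with $\det(M_0^-)=0$ (the new $-1$-eigenspace generator is a linear combination of the old ones, because the rows of the unreduced Goeritz matrix sum to zero), whence $\det(M^-_C)=2\det(M^-)$ and $\sign\bigl(\det(M^-)\det(M^-_C)\bigr)=+1$; so $\sigma(M^-_C)=\sigma(M^-)+1$ always, the difference $\sigma(M^+)-\sigma(M^-)$ moves only by $0$ or $-2$, and this matches the one-sided $0$ or $-2$ behaviour of $e(D)$ for a positive move. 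Nothing in your handle-attachment picture records the sign of the new eigenvalue in the $-1$-eigenspace, so as written the argument does not close.

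A secondary problem is your fallback: deducing the type C bound from Theorem \ref{mainthmB} ``together with an isotopy argument'' is not viable. A type C move changes the number of white regions and can change the correction term, while a directed type B move does neither; the two are not related by an isotopy, and the paper handles them by genuinely different computations (a one-entry perturbation of a fixed-size matrix versus a one-dimensional enlargement). If you want to complete your route, the missing ingredient is precisely the analogue of the lemma $\det(M_0^-)=0$, i.e.\ a proof that the new class in the $-1$-eigenspace becomes dependent on the old ones after the split resolution of the clasp.
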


It has been shown in \cite[Theorem 1.8]{BC24} that the only knots which can be unknotted using only type C moves are $(1,2)$-knots where their axis of symmetry coincides with the core of one of the handlebodies in the $(1,2)$-decomposition. We get an immediate corollary:

\begin{cor}
    Let $K$ be a directed strongly invertible knot. If $K$ is a $(1,2)$-knots such that the axis of symmetry coincides with the core of one the handlebodies in the $(1,2)$-decomposition, then $\lvert\sigmaeq(K)\rvert\leq2\ueqc(K)$.
\end{cor}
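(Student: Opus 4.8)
The plan is to deduce this corollary formally from Theorem~\ref{mainthmC} together with the classification of type~C unknottable knots in \cite{BC24}, by telescoping the equivariant signature along an optimal type~C unknotting homotopy.

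If $\ueqc(K)=\infty$ there is nothing to prove, so assume $\ueqc(K)<\infty$. By hypothesis $K$ is a $(1,2)$-knot whose axis of symmetry coincides with the core of one of the two handlebodies in a $(1,2)$-decomposition, so by \cite[Theorem~1.8]{BC24} such a $K$ can be unknotted using only type~C moves; choosing a regular equivariant homotopy from $K$ to the unknot which uses only type~C moves and realises the minimum $\ueqc(K)$, we obtain a finite sequence of directed strongly invertible knots
\[
K=K_0,\quad K_1,\quad\dots,\quad K_n=U,
\]
where $U$ denotes the unknot (with whatever direction the homotopy leaves it), and each $K_i$ differs from $K_{i-1}$ by a single type~C move. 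Since each type~C move accounts for at least one transverse self-intersection, the number $n$ of moves satisfies $n\le\ueqc(K)$.

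Now apply Theorem~\ref{mainthmC} to each consecutive pair $(K_{i-1},K_i)$: this gives $\lvert\sigmaeq(K_i)-\sigmaeq(K_{i-1})\rvert\le 2$ for every $i\in\{1,\dots,n\}$. Summing over $i$ and using the triangle inequality,
\[
\lvert\sigmaeq(K)-\sigmaeq(U)\rvert\ \le\ 2n\ \le\ 2\,\ueqc(K).
\]
As the equivariant signature of the directed unknot vanishes, $\sigmaeq(U)=0$, and this is precisely the asserted inequality $\lvert\sigmaeq(K)\rvert\le 2\,\ueqc(K)$.

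The only delicate point — and the reason the statement is phrased for \emph{directed} strongly invertible knots — is the bookkeeping of directions along the homotopy: $\sigmaeq$ depends on the chosen direction while $\ueqc(K)$ refers to the underlying strongly invertible knot, and a type~C move may leave an ambiguity in how the direction propagates. This, however, is exactly what Theorem~\ref{mainthmC} is designed to absorb, since it bounds the variation of $\sigmaeq$ across \emph{any} type~C move between directed strongly invertible knots; so it suffices to fix any coherent choice of directions along the sequence and to note that the terminal unknot has $\sigmaeq=0$ irrespective of that choice. I expect no further obstacle: all the real content already lies in Theorem~\ref{mainthmC}.
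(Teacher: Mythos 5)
Your proposal is correct and matches the paper's intent: the corollary is stated there as an immediate consequence of Theorem \ref{mainthmC}, obtained exactly by telescoping the bound $\lvert\sigmaeq(K_i)-\sigmaeq(K_{i-1})\rvert\leq 2$ along a minimal type~C unknotting sequence and using $\sigmaeq(U)=0$. The handling of the $\ueqc(K)=\infty$ case and the remark on directions are fine and add nothing problematic.
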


Finally for the generic case of type A double points, we obtain a slightly different result.

\begin{thm}\label{mainthmA}
    If $K$ and $K'$ are two directed strongly invertible knots which differ by a type A move, then $\sigmaeq(K)-6\leq\sigmaeq(K')\leq\sigmaeq(K)+6$.
\end{thm}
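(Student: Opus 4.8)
The plan is to imitate the arguments behind Theorems~\ref{mainthmB} and~\ref{mainthmC}, working with an equivariant Seifert surface and tracking how the equivariant signature changes under a modification of that surface supported away from the axis. First I would fix an equivariant Seifert surface $F$ for $K$ meeting the axis $S$ in a single arc and recall from \cite{AB24} how $\sigmaeq(K)$ is read off $(F,\rho)$: the induced involution splits $H_1(F)$ into the $(\pm1)$-eigenspaces of $\rho_\ast$, and $\sigmaeq(K)$ is the signature of a Hermitian form assembled from the equivariant Seifert matrix of $F$; equivalently, it is the $G$-signature $\sigma(\tau;W)$ of the $\Z/2$-equivariant $4$-manifold $W$ obtained by pushing $F$ equivariantly into $B^4$ and passing to the double branched cover, with $\tau$ the lifted involution.

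Since a type A move is supported at a pair of points $p,\rho(p)$ disjoint from $S$, I would arrange $F$ so that the two simultaneous crossing changes take place in two disjoint $\rho$-conjugate balls $B$ and $\rho(B)$, also disjoint from $S$, and so that the Seifert surface $F'$ for $K'$ agrees with $F$ outside $B\cup\rho(B)$. Each crossing change is realised by a rank-one modification of the Seifert matrix inside a band, so the equivariant Seifert form of $F'$ differs from that of $F$ by a perturbation supported on a $\rho$-invariant subspace spanned by one orbit of bands together with the generators of $H_1(F)$ used to route that orbit across the axis; dually, pushing into $B^4$ yields an equivariant cobordism $V$ between $W$ and $W'$ whose second homology carries the involution and whose $(\pm1)$-eigenspaces have controlled rank. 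The change in $\sigmaeq$ is then at most the total such rank, which I would bound by $6$ as follows: the part of $\sigmaeq$ that behaves like an ordinary knot signature changes by at most $2$ per crossing change, by the classical estimate of \cite{M65}, hence by at most $4$, while the remaining part of the $G$-signature — the one recording the fixed arc of $F$ and the \emph{direction} — contributes a further error of at most $2$.

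The step I expect to be the main obstacle is the honest bookkeeping of this last ``$+2$''. In the type B and type C cases the modification is local near the axis and the extra term is essentially read off directly; for a type A move one must instead (i) keep the surface modification, the equivariant push-in, and the induced surgery genuinely $\rho$-equivariant \emph{and} compatible with the direction, so that $V$ really does have the claimed equivariant homology, and (ii) control the defect between $\sigma(\tau;W')-\sigma(\tau;W)$ and the classical bound $|\sigma(K')-\sigma(K)|\le 4$, that is, bound how far the deck transformation and the fixed arc of $F$ can be twisted by the move. This is where the asymmetry of the constant $6$ rather than $4$ is forced, and where one has to be careful with the $G$-signature formula of \cite{AS68}.

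Finally, as a consistency check on the constant, it would be worth asking whether a single type A move can be traded, up to equivariant isotopy, for at most three directed type B and type C moves; such a decomposition would reprove the theorem through Theorems~\ref{mainthmB} and~\ref{mainthmC}, each of which costs~$2$.
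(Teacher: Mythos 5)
There is a genuine gap: your argument is a plan whose two quantitative inputs are both unestablished, and the arithmetic behind the constant $6$ is in fact backwards. You propose $6 = 4 + 2$, with $4$ coming from ``the part of $\sigmaeq$ that behaves like an ordinary knot signature'' via Murasugi's bound, and $2$ from the direction/axis correction. But Murasugi's estimate controls $\sigma(K)=\sigma\big((\theta+\theta^T)|_{E_+}\big)+\sigma\big((\theta+\theta^T)|_{E_-}\big)$, not the \emph{difference} of the two eigenspace signatures that enters $\sigmaeq$, so the citation of \cite{M65} does not give you the claimed $\le 4$; and the correction term actually changes by $0$ or $\pm 4$ under a type A move, not by at most $2$: the two crossings in a $\rho$-orbit are either both unicolored (no change to $e(D)$) or both bicolored with the \emph{same} sign, so flipping both changes $e(D)$ by $\pm 4$. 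The true accounting is $6 = 2 + 4$: the difference $\sigma(M^+)-\sigma(M^-)$ changes by at most $2$, and $e(D)$ by at most $4$. You also explicitly flag the ``honest bookkeeping of the last $+2$'' as an unresolved obstacle, which is an admission that the proof is not complete; and the closing suggestion that one type A move might be traded for three B/C moves is unsupported and is not how the bound is obtained.

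For comparison, the actual argument stays entirely diagrammatic with Goeritz matrices and splits into two cases according to which white regions the $\rho$-orbit of crossings touches. If one of the two regions is the deleted region containing the direction (type A1), then each of $M^{\pm}$ changes in a single diagonal entry, and Jones' theorem on $\sigma$-series bounds the change of $\sigma(M^+)-\sigma(M^-)$ by $2$. If both regions are marked (type A2), then $M^{\pm}_A=M^{\pm}\pm 4uu^T$ with $u=e_{n-1}-e_n$, a rank-one semidefinite perturbation; Weyl interlacing shows at most one eigenvalue of each of $M^+$, $M^-$ crosses zero, and it crosses in the same direction for both, again bounding the change of the difference by $2$. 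Adding the $\pm 4$ from $e(D)$ gives $6$. If you want to salvage your approach, you would need to (i) prove an interlacing or surgery estimate directly for the restriction of the form to each eigenspace rather than importing the classical bound, and (ii) redo the count for $e(D)$ correctly.
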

It has been shown by Boyle and Chen that every strongly invertible knot $K$ admits an equivariant homotopy with only type $A$ double points which unknots $K$. Using this fact, we can consider the type $A$ unknotting number $\ueq_A$ and derive the following result.
\begin{cor}
    If $K$ is a directed strongly invertible knot, then $\lvert\sigmaeq(K)\rvert\leq3\ueqa(K)$.
\end{cor}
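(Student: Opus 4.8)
The plan is to derive the corollary from Theorem~\ref{mainthmA} by telescoping, exactly as the classical bound $\lvert\sigma(K)\rvert\le 2u(K)$ follows from the per-crossing-change estimate $\lvert\Delta\sigma\rvert\le 2$. First, fix a direction on $K$ and, using the Boyle--Chen result that every strongly invertible knot can be unknotted by an equivariant homotopy with only type A double points, choose such a homotopy realizing $\ueqa(K)$ transverse double points. Since a type A move introduces a pair of double points interchanged by $\rho$, this homotopy passes through a finite chain of embedded strongly invertible knots $K=K_0,K_1,\dots,K_m=U$ in which $K_i$ and $K_{i+1}$ differ by a single type A move, with $2m=\ueqa(K)$; in particular $\ueqa(K)$ is even and $m=\ueqa(K)/2$.

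Next I would check that each $K_i$ inherits a well-defined direction from $K$, so that Theorem~\ref{mainthmA} applies to every consecutive pair. This is clear: a type A move is supported away from the axis $S$, and along a type-A-only homotopy the knot always meets $S$ in exactly two points, which slide continuously along $S$; hence the two arcs of $S\setminus K$, and with them the chosen direction, are transported continuously along the chain. Applying Theorem~\ref{mainthmA} to each pair $(K_i,K_{i+1})$ gives $\lvert\sigmaeq(K_i)-\sigmaeq(K_{i+1})\rvert\le 6$, and the triangle inequality then yields $\lvert\sigmaeq(K)-\sigmaeq(U)\rvert\le 6m=3\,\ueqa(K)$. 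Since the equivariant signature of the unknot vanishes, $\sigmaeq(U)=0$, so $\lvert\sigmaeq(K)\rvert\le 3\,\ueqa(K)$.

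I do not anticipate a genuine obstacle: the substance lies entirely in Theorem~\ref{mainthmA} and in the cited realization statement of Boyle and Chen. The only points requiring care are the bookkeeping — that $\ueqa$ counts transverse double points, which occur in $\rho$-related pairs for type A moves, so that the number of moves is $\ueqa(K)/2$ and the per-move constant $6$ becomes the factor $3$ — the continuity argument showing the direction persists under the homotopy, and the normalization $\sigmaeq(U)=0$.
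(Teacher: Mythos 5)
Your argument is correct and is exactly the intended derivation: the paper treats this corollary as an immediate consequence of Theorem~\ref{mainthmA} together with the Boyle--Chen fact that type A moves suffice to unknot any strongly invertible knot, via the same telescoping through a chain $K=K_0,\dots,K_m=U$ with $\sigmaeq(U)=0$. You also correctly identified the one piece of bookkeeping that makes the constant $3$ (rather than $6$) come out, namely that $\ueqa$ counts transverse double points and each type A move contributes a $\rho$-related pair, so the number of moves is $\ueqa(K)/2$; this matches the paper's definition of $\ueq$ as the minimal number of transverse self-intersections.
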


%Having established those results, there are natural questions that arise.
%We have established the result for type B after fixing a specific direction out of the two. One can first ask if the difference of equivariant signatures when considering either direction is arbitrarily large, or can it be bounded by an invariant?
%If that was the case, then the equivariant unknotting number 

%\textcolor{blue}{The open questions/conjectures. For the two directions, is the difference of signatures bounded? // For type C, how to go from K to K' (connected components; we know the one for unknot, what about the rest)? "the type B crossing change for non-admissible diagrams (Is it true that the jump in signature in the general type B case can be arbitrarily large?)"}

\vspace{5mm}
\noindent\textbf{Acknowledgements:} The author is deeply grateful to Antonio Alfieri for his invaluable help and guidance throughout the course of this project.

\section{Background material}

\subsection{Admissible diagrams}\label{section:admissible}
We first address the admissibility condition which will be relevant in the definition of the correction term.
\begin{defi}
    If $h$ is the chosen direction, then $F$ is an \textbf{admissible} surface for $K$ if $\partial F=K$, and it satisfies that $h\subset F$ and $h'\cap F=\emptyset$.
\end{defi}
We can naturally extend the definition of admissibility to the diagrams themselves, by requiring that there are no crossings along $h'$. This is equivalent to the checkerboard surface containing $h$ being admissible.

\begin{prop}\label{prop:admissible}
    A strongly invertible knot always admits an admissible diagram.
\end{prop}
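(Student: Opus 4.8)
The goal is to show that given a directed strongly invertible knot $(K,\rho)$ with chosen direction $h$, we can isotope $K$ equivariantly so that its diagram has no crossings along the other half-axis $h'$. The natural strategy is to work in the quotient orbifold and use the fact that strongly invertible knots can be encoded by a tangle-like picture in a disk. First I would recall the standard picture: after an equivariant isotopy, the axis of symmetry $S$ becomes a straight line (a great circle, say the $z$-axis compactified), and the involution $\rho$ is rotation by $\pi$ about $S$. Projecting to a plane containing $S$, the knot $K$ is traced out by a half-arc together with its image under $\rho$, meeting $S$ in exactly the two fixed points that $S$ shares with $K$, and these two points separate $S$ into $h$ and $h'$.

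The key step is to arrange that all the "interesting" part of the knot happens on one side. I would argue that $K$ can be equivariantly isotoped so that the two fixed points on $K$ are pushed together along $h'$ until one of the arcs — specifically, we want to clear $h'$ of crossings. Concretely, one can isotope $K$ in a neighborhood of $h'$: since $h'$ is an unknotted arc disjoint from $K$ except at its endpoints, and the projection crossings lying over $h'$ come in $\rho$-symmetric configurations, we can "slide" the strands of $K$ that cross over or under $h'$ across the fixed axis, transferring all such crossings to a neighborhood of $h$ instead. Because $\rho$ acts on the projection plane as a reflection in the line $S$, a crossing along $h'$ is genuinely a crossing of $K$ with a strand that the involution sends to the other side; an equivariant finger move / isotopy pushing that strand off $h'$ and around does not change the knot type and removes the crossing. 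Iterating over the finitely many crossings along $h'$ terminates with an admissible diagram, and the corresponding checkerboard surface containing $h$ is then admissible by the remark preceding the proposition.

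The main obstacle I expect is making the "slide crossings off $h'$" step precise while keeping everything equivariant: one must verify that the isotopy can be chosen to commute with $\rho$ (so pushing one strand forces a symmetric push of its mirror strand, and these two moves don't interfere with each other near the fixed points), and that the process genuinely terminates — that clearing one crossing from $h'$ does not create new crossings along $h'$. A clean way to handle this is to choose a $\rho$-invariant regular neighborhood $N(S)$ of the whole axis, shrink it so that $K \cap N(S)$ consists of two small unknotted arcs near the two fixed points, and then observe that outside $N(S)$ the quotient $K/\rho$ is an honest arc in a half-space whose endpoints we may drag to any convenient position; choosing that position to lie over $h$ rather than $h'$ yields the diagram. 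One should also note the harmless ambiguity that "no crossings along $h'$" can always be achieved but the analogous statement for $h$ generally cannot, which is precisely why admissibility is a condition on the \emph{directed} knot.
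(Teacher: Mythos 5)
Your proposal takes essentially the same route as the paper: the paper's entire proof is a single diagrammatic move (its Figure~\ref{fig:admissiblediagram}) showing that a crossing lying over $h'$ can be pushed along the axis of symmetry, past a fixed point of $K\cap S$, onto $h$ --- which is exactly your ``slide the crossings off $h'$'' step. The additional care you take about equivariance and termination is sensible but is not elaborated in the paper either, which simply exhibits the local move.
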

\begin{proof}
    If a diagram of a strongly invertible knot is not admissible, then there are some crossings along the arc $h'$. We can always ``push'' those along the axis of symmetry and out of the directed arc, as depicted in Figure \ref{fig:admissiblediagram} (here, the directed arc is the non-compact arc). 
    \begin{figure}[ht!]
    \def\svgwidth{0,5\columnwidth}
    \centering
    \import{./images/}{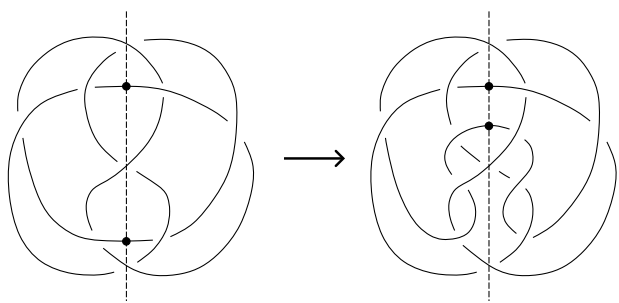}

    \caption{A move showing that one can always make a diagram admissible.}
    \label{fig:admissiblediagram}
\end{figure}
\end{proof}

\begin{remark}
    %For crossing changes of type A and C, we can perform this previously described move before or after performing the crossing change, and obtain the same knot. However, for a type B crossing change, performing this move before or after the crossing change will not result in the same knot. This is what leads us to consider only admissible diagrams in the type B case.
    Note that the moves performed in the procedure underlying Proposition \ref{prop:admissible} commute with  unknotting moves of type A or C.
    %By performing this previously described ``push'' along the axis either before or after a type A or C crossing change, we will obtain the same knot.
    However, this is not the case for a type B crossing change since the push moves may cross the arc contracted by the regular homotopy, resulting in a different knot after the crossing change operation.
    %Depending on the order in which we apply the push moves and the unknotting type B move, we might end up with different knots.
    This is what leads us to consider only admissible diagrams in the type B case.
\end{remark}

\subsection{The equivariant signature and the Goeritz matrix}
Given a spanning surface $F$ for a knot $K$, Gordon and Litherland \cite{GL78} associate a bilinear pairing
\[
\theta \colon H_1(F;\mathbb{Z}) \times H_1(F;\mathbb{Z}) \to \mathbb{Z}.
\]
The signature of $\theta+\theta^T$ depends on the choice of surface $F$ and therefore is not a knot invariant. However, Gordon and Litherland \cite[Theorem 6]{GL78} show that after adding an explicit correction term, one obtains an invariant of the knot.
When the surface $F$ arises from a checkerboard coloring of a knot diagram, the pairing $(H_1(F),\theta)$ admits a purely diagrammatic description in terms of the \emph{Goeritz matrix}. We briefly recall this construction.

Consider a checkerboard coloring of the knot diagram, and label by $X_0,X_1,\ldots,X_n$ the white regions. For each crossing $c$, assign a number $\eta(c)$ which is either $+1$ or $-1$ as given by Figure \ref{fig:crossingsign}.
\begin{figure}[h!]
    \centering
    \includegraphics[width=0.4\textwidth]{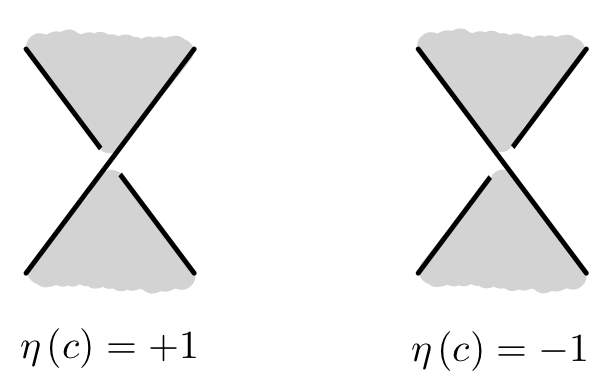}
    \caption{The incidence number $\eta(c)=\pm1$ associated to a crossing $c$.}
    \label{fig:crossingsign}
\end{figure}
Let $S_{ij}$ denote the set of crossings incident to both $X_i$ and $X_j$. We can then define a matrix $G'=(g_{ij})_{0\leq i,j\leq n}$ such that
$$g_{ij}=\begin{cases}
-\sum\limits_{c\in S_{ij}}\eta(c) \quad\text{if } i\neq j,\\
\sum\limits_{0\leq k\leq n, k\neq i}g_{ik}\quad\text{ otherwise}.
\end{cases}$$

As shown in \cite{GL78}, the symmetric form represented by $G'$ computes the Gordon-Litherland pairing $\theta+\theta^T$ on $H_1(F;\mathbb{Z})$, after identifying $H_1(F)$ with a free abelian group $V$ generated by $X_0, \dots, X_n$ modulo the relation $X_0+\ldots+X_n=0$.

\begin{defi}
    The Goeritz matrix $G$ is the matrix obtained from $G'$ by deleting the $i$-th row and column for some $0\leq i\leq n$.
\end{defi}

Note that in particular $\sign(G)= \sign(\theta+\theta^T)$. In our case, where we consider symmetric knot diagrams, we delete the row and column corresponding to the unique white region fixed by the involution $\rho$, namely the region containing the arc $h'$ between the two fixed points $\Fix(\rho)\cap K$.

%%%%%%%%%%%%%%%%%%%%%%%%%%%%%%%%%%%%%%

%Now suppose that $K$ is a directed strongly invertible knot with involution $\rho$. In \cite{AB24}, Alfieri and Boyle define an \emph{equivariant signature} refining the Gordon-Litherland construction. As in the classical case, an additional correction term is required to obtain an invariant independent of the chosen equivariant spanning surface. (The correction as in the Gordon-Litherland construction is suggested by the Atitah-Singer $G$-signature theorem.)
Now, in a similar fashion as Gordon and Litherland in \cite{GL78}, Alfieri and Boyle \cite{AB24} defined a certain correction term $e(D)$ so that the equivariant signature will not depend on the chosen equivariant (possibly non-orientable) surface $F$ having $K$ as boundary. In \cite{AB24}, Alfieri and Boyle define this correction term (which is derived from the $G$-signature formula of Atiyah-Singer \cite{AS68}), and find an explicit diagrammatic definition of it for admissible diagrams which we describe now.

Consider a diagram $D$ of $K$, and consider the two arcs $D\setminus\Fix(\rho)=a\cup b$ mapped to each other by $\rho$; we orient both arcs such that their orientation matches under $\rho$. For a crossing $c$, we assign a sign $\epsilon(c)=\pm 1$ induced by the orientations on $a$ and $b$:
\begin{defi}
    If $D$ is an admissible diagram for $K$, then with respect to the checkerboard surface, the correction term can be defined as
    \[
    e(D)=-\sum_{c\in a\cap b, c\notin h} \epsilon(c).
    \]
\end{defi}

\begin{comment}
    The involution $\rho$ induces an involution on $H_1(F;\mathbb{Z})$, yielding a decomposition
\[
H_1(F;\mathbb{Z}) = E_+ \oplus E_-,
\]
where $E_\pm$ are the $\pm1$ eigenspaces. Denoting by $\sigma$ the signature of a symmetric bilinear form, the equivariant signature is defined as follows.
\end{comment}
In the future, we will refer to a crossing $c$ as \emph{bicolored} if $c\in a\cap b$, or \emph{unicolored} otherwise.
Now, $\rho$ induces an involution on the first homology group of the surface $F$, such that the latter splits into a direct sum $E_+\oplus E_-$ of the $+1$ and $-1$ eigenspaces of the involution. Denoting by $\sigma$ the usual signature, the equivariant signature is defined as follows.

\begin{defi}[Alfieri-Boyle \cite{AB24}]
    For a directed strongly invertible knot $K$, the equivariant signature is defined as
    $$\sigmaeq(K)=\sigma((\theta+\theta^T)\lvert_{E_+})-\sigma((\theta+\theta^T)\lvert_{E_-})-e(D).$$
\end{defi}
%Note that the equivariant signature does not depend on the choice of diagram of the knot $K$.

Finally we describe a basis realizing the splitting $E_+\oplus E_-$ in presence of a  symmetric diagram. Label by $a_i$ and $a'_i$, $1\leq i\leq n$,  the white regions $X_1, \dots, X_n$ of the checkerboard coloring so that that $\rho(a_i)=-a'_i$.
%(The unique white region fixed by $\rho$ has been removed in the definition of the Goeritz matrix.)
We will write the Goeritz pairing in the basis $(a_1,\ldots,a_n,a'_1,\ldots,a'_n)$. Note that the change of basis   
\[(a_1,\ldots,a_n,a'_1,\ldots,a'_n) \to (a_1-a_1',\ldots,a_n-a_n',a_1+a'_1,\ldots,a_n+a'_n)\]
gives a block decomposition 
$$\begin{pmatrix}
        M^+ & 0\\
        0 & M^-
    \end{pmatrix}$$
with $M^+$ and $M^-$ representing the restriction of the Goeritz form on the eigenspaces $E_+$ and $E_-$ respectively.

\begin{comment}
The Goeritz pairing defines a symmetric bilinear form
\[
\mathcal{G}\colon V\times V\to \mathbb{Z},
\]
whose matrix with respect to the ordered basis $(a_1,\ldots,a_n,a'_1,\ldots,a'_n)$ is precisely the Goeritz matrix $G$. Explicitly, the elements $g_{ij}$ of the Goeritz matrix can be written as $\mathcal{G}(a_i,a_j)$ if $1\leq i,j\leq n$, and as $\mathcal{G}(a_i,a'_j)$ if $1\leq i\leq n$ and $n+1\leq j\leq 2n$; the matrix is symmetric so the rest of the elements are given accordingly.

    the elements $g_{ij}$ of the Goeritz matrix can be written as $\mathcal{G}(a_i,a_j)$ if $1\leq i,j\leq n$, and as $\mathcal{G}(a_i,a'_j)$ if $1\leq i\leq n$ and $n+1\leq j\leq 2n$; the matrix is symmetric so the rest of the elements are given accordingly.
\end{comment}

\begin{exa}
    In Figure \ref{fig:movetypes}, a Goeritz matrix for the given graph of $6_1$ is
    \[
M=\begin{bNiceArray}[first-row,first-col]{ccccc}
    & a_1 & a_2 & a'_1 & a'_2 \\
    a_1 & -3 & 1 & 2 & 0 \\
    a_2 & 1 & -2 & 0 & 0 \\
    a'_1 & 2 & 0 & -3 & 1 \\
    a'_2 & 0 & 0 & 1 & -2 \\
\end{bNiceArray}
\]
and the matrices of the $+1$ and $-1$ eigenspaces over $\mathbb{R}$ are
\[
M^+=\begin{bNiceArray}[first-row,first-col]{ccc}
    & a_1-a'_1 & a_2-a'_2 \\
    a_1-a'_1 & -10 & 2 \\
    a_2-a'_2 & 2 & -4 \\
\end{bNiceArray}, \quad
M^-=\begin{bNiceArray}[first-row,first-col]{ccc}
    & a_1+a'_1 & a_2+a'_2 \\
    a_1+a'_1 & -2 & 2 \\
    a_2+a'_2 & 2 & -4 \\
\end{bNiceArray}.
\]
Since the projections of the two arcs $a$ and $b$ only meet on the axis of symmetry, the correction term is $e(D)=0$ here, and we find that $\sigmaeq(6_1)=0-2=-2$.
\end{exa}

\begin{remark}
    Later, we shall use the fact that both matrices $M^+$ and $M^-$ have \emph{non-zero} determinant. Indeed, we can perform a change of basis to give $M$ as the block matrix
    $$\begin{pmatrix}
        M^+ & 0\\
        0 & M^-
    \end{pmatrix}=\begin{pmatrix}
        I & -I\\
        I & I
    \end{pmatrix} M\begin{pmatrix}
        I & I\\
        -I & I
    \end{pmatrix}$$
    where $I$ denotes the identity matrix of size $n$ (if $M$ is of size $2n$). Taking the determinant, one concludes that $\det M^+ \cdot \det M^-= 4^n \det M$. Since  $|\det M|=\det(K)$ and for a  knot $\det(K)\not=0$ (being an \emph{odd number}) one concludes that both $\det M^+\not=0$, and $\det M^-\not=0$.
\end{remark}

\section{Proof of Theorem \ref{mainthmB}}

We need to understand how the signatures of $M^+$ and $M^-$, the matrices of the restriction of the Seifert form over the $+1$ and $-1$ eigenspaces of the involution, change after modifying one entry from those matrices. The main tool here will be the following theorem on symmetric matrices:
\begin{thm}[Jones \cite{J50}]\label{thm:jones}
    Let $M=(m_{ij})$ be a symmetric $n\times n$ matrix over $\mathbb{R}$. There is a sequence of principal submatrices $\Delta_0=1, \Delta_1, \Delta_2, \ldots$ such that $\Delta_i$ is a principal minor of $\Delta_{i+1}$, and no consecutive $ \Delta_i, \Delta_{i+1}$ are both singular for $i<\rank(M)$ (this is called a \textbf{$\sigma$-series}). Additionally, for any such series, the signature of $M$ can be computed as
    \[
    \sigma(M)=\sum_{i=1}^n \sign(\det(\Delta_{i-1})\det(\Delta_i)).
    \]
\end{thm}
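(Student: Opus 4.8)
The plan is to reduce to the classical leading‑minor (Jacobi) criterion and push it through the singular steps using a $2\times2$‑pivot refinement of symmetric Gaussian elimination. First I would normalise the situation: conjugating $M$ by the permutation that lists the indices in the order in which they enter the chain turns the given $\sigma$‑series into the sequence of leading principal submatrices, and this changes neither $\sigma(M)$ (permutation similarity preserves eigenvalues) nor any of the determinants $d_i:=\det\Delta_i$ (each $\Delta_i$ is carried to itself). So it suffices to prove, for the leading minors $d_0=1,d_1,\dots,d_n$ of a symmetric $M$, that the $\sigma$‑series hypothesis forces $\sigma(M)=\sum_{i=1}^{n}\sign(d_{i-1}d_i)$; in particular this gives the asserted independence of the chosen series. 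I would run this as an induction on the size of $M$, simultaneously producing a congruence of $M$ onto a block‑diagonal matrix whose diagonal blocks are $1\times1$, hence invertible, or indefinite $2\times2$, hence of determinant $<0$ and signature $0$.

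For the inductive step, if $d_1=M_{11}\neq0$ then one step of symmetric elimination is a congruence splitting off the block $[d_1]$ and leaving the $(n-1)\times(n-1)$ Schur complement $M'$; by the Schur determinant identity the leading minors of $M'$ are $d_2/d_1,\dots,d_n/d_1$, so the chain passes to a $\sigma$‑series for $M'$ and induction gives $\sigma(M)=\sign(M_{11})+\sigma(M')=\sign(d_0d_1)+\sum_{i=2}^{n}\sign(d_{i-1}d_i)$. If instead $d_1=0$, then $\Delta_1$ is singular, so (in the range where the rank is not yet exhausted) $\Delta_2$ is nonsingular, i.e.\ $d_2\neq0$; since $d_2=M_{11}M_{22}-M_{12}^2=-M_{12}^2$ this forces $M_{12}\neq0$, and $B=M[\{1,2\}]$ is an admissible indefinite pivot. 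Congruence then gives $M\cong B\oplus M''$ with $M''$ the Schur complement on the remaining indices, whose leading minors are $d_3/\det B,\dots,d_n/\det B$; hence $\sigma(M)=\sigma(B)+\sigma(M'')=0+\sum_{i=3}^{n}\sign(d_{i-1}d_i)$ by induction — the sign of $\det B<0$ being reconciled against $d_2=\det B$ — and the two missing terms $\sign(d_0d_1)$, $\sign(d_1d_2)$ vanish because $d_1=0$. Running this procedure forwards rather than against a given chain — always pivoting on a nonzero diagonal entry when one exists, otherwise on a nonzero off‑diagonal entry, and terminating with trivial $[0]$ blocks once the rank is used up — also produces a $\sigma$‑series, which handles the existence part.

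The genuine work, and the step I expect to be the main obstacle, is the bookkeeping around the singular steps: checking that an indefinite $2\times2$ pivot is available exactly when one is needed (this is precisely where the no‑two‑consecutive‑singular hypothesis enters, and it is also where the exact formulation of ``$\sigma$‑series'' must be pinned down — the condition has to persist until the chain's rank reaches $\rank M$, so that the chain really does detect the full signature and does not stall, as it would for $M=\diag(0,1)$ along the naive leading‑minor chain), together with the checks that the Schur complements genuinely inherit the $\sigma$‑series property and that the low‑rank base cases are consistent. Once the pivot structure is in place the signature accounting is mechanical: a $1\times1$ pivot at step $i$ contributes exactly $\sign(d_{i-1}d_i)$, an indefinite $2\times2$ pivot straddling steps $i$ and $i+1$ contributes $0=\sign(d_{i-1}d_i)+\sign(d_id_{i+1})$, and summing over the blocks telescopes to $\sigma(M)$.
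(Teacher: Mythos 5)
The paper does not prove this statement---it is imported verbatim from Jones \cite{J50} and used as a black box---so there is no internal argument to compare against. Your proof is the standard one for this result and I believe it is correct and essentially complete: the permutation reduction to leading principal minors is harmless (orthogonal congruence preserves $\sigma$ and each $\det\Delta_i$), the Schur-complement quotient identity $d'_k=d_{k+1}/d_1$ (resp.\ $d''_k=d_{k+2}/d_2$) shows both that the truncated chain is again a $\sigma$-series for the complement and that the signs $\sign(d_{i-1}d_i)$ are inherited, and Sylvester's law gives additivity of the signature over the $1\times1$ and indefinite $2\times2$ blocks, whose contributions ($\sign(d_{i-1}d_i)$ and $0=\sign(d_{i-1}d_i)+\sign(d_id_{i+1})$ respectively) telescope to the stated formula; the forward-running version of the same pivot selection gives existence. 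You are also right to flag that the hypothesis ``for $i<\rank(M)$'' is, read literally, too weak once $M$ is singular: for $M=\diag(0,1)$ the leading chain $1,[0],M$ satisfies it, yet the formula returns $0\neq 1=\sigma(M)$. The condition needs to reach $i\le\rank(M)$ (equivalently, the chain must pass through a nonsingular principal submatrix of size $\rank(M)$), which is exactly where your $2\times2$ pivot would otherwise fail to exist. This discrepancy is immaterial for the paper: as the author remarks right after the theorem, it is only ever applied to nonsingular matrices ($\det M^{\pm}\neq 0$), where every consecutive pair is covered by the hypothesis and the top determinant is nonzero, and there your induction closes without incident.
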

Note that in the following, we will always work will full-rank matrices.

Now, we will start by going through how the quantity $e(D)$ changes after the different move types.
\begin{lem}
    The correction term $e(D)$ follows the following pattern:
    \begin{itemize}
        \item after a type A move: it either remains unchanged, or changes by $\pm4$;
        \item after a type B move: it remains the same;
        \item after a type C move: it either remains unchanged, or changes by $\pm2$.
    \end{itemize}
\end{lem}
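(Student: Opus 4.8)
The correction term is $e(D)=-\sum_{c\in a\cap b,\,c\notin h}\epsilon(c)$, a signed count of bicolored crossings away from the axis. So to track how $e(D)$ changes under a move, I would first understand (i) which crossings are affected by the move and whether they are bicolored, and (ii) how the signs $\epsilon(c)$ of the surviving bicolored crossings can change. The key point is that $\epsilon(c)$ is induced by fixed orientations on the two arcs $a$ and $b$, so $\epsilon$ of a crossing only flips if one of the two strands through it gets reversed relative to the chosen orientation — which does not happen during an equivariant homotopy (the arcs $a,b$ are isotoped along, carrying their orientations). Hence the only way $e(D)$ changes is by the \emph{creation or deletion} of bicolored crossings, and I just need to count, in each move type, how many bicolored crossings are created/destroyed and with what signs.

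\textbf{Type B.} Here the double point lies on the axis $S$, and at a directed type B move the crossing change happens at a point of $h\subset a\cap b$ — but crossings on $h$ are explicitly excluded from the sum defining $e(D)$. More precisely, a type B crossing change simply toggles an existing crossing on the axis from over to under (or resolves and re-crosses it); it neither creates nor removes any crossing off the axis, and it does not reverse either arc. So every term in $\sum_{c\in a\cap b,\,c\notin h}\epsilon(c)$ is untouched, and $e(D)$ is unchanged. This is the cleanest case and I would dispatch it first.

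\textbf{Type C.} A type C move contracts a sub-arc of the axis, creating two new crossings on the projection. By the geometry of the move (the knot strand passes over/under a segment of the axis, i.e. over/under the \emph{other} strand through those axis points), both new crossings are bicolored: each is an intersection of $a$ with $b$. One of the two new crossings may lie on $h$ (if we contract part of the directed arc) and is then excluded, but generically I would argue that the two created crossings are reflections of each other across $S$ and contribute signs $\epsilon$ that either cancel or add — giving a net change of $e(D)$ by $0$ or $\pm 2$. I would make this precise by drawing the local model of the type C move and reading off $\epsilon$ from Figure-style conventions, checking the two sub-cases (both new crossings counted, versus one on $h$).

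\textbf{Type A.} Two double points occur away from $S$, interchanged by $\rho$. At each, a crossing change is performed; near each such point a pair of new crossings can be created (the standard "finger move" picture of a crossing change creates no net crossing, but passing a strand through another to realize the homotopy creates two crossings of opposite type locally). Because the two double points are swapped by $\rho$, the situation at one is the $\rho$-image of the other, so any bicolored crossings created come in $\rho$-orbits of size two with equal $\epsilon$-contribution, doubling whatever happens at one double point. At a single double point, the local change to $\sum\epsilon(c)$ over bicolored crossings is $0$ or $\pm 2$ (depending on whether the two strands meeting there belong to the same arc or to $a$ and $b$ respectively, and on orientations), so the total is $0$ or $\pm 4$. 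The main obstacle is this type A case: one must carefully set up the local picture of a type A crossing change as an equivariant homotopy, identify exactly which of the finitely many crossings in the local tangle are bicolored, and verify that the non-bicolored creations/deletions don't contribute while the bicolored ones contribute in $\pm 2$ increments that then double by the $\rho$-symmetry. I would organize this by a single local lemma handling one crossing change, then invoke $\rho$-equivariance to get the factor of two.
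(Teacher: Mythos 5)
Your central premise---that $\epsilon(c)$ depends only on the orientations of $a$ and $b$ and hence ``the only way $e(D)$ changes is by the creation or deletion of bicolored crossings''---is wrong, and it breaks your type A argument. The sign $\epsilon(c)$ is the usual sign of a crossing in an oriented diagram, which depends on the over/under data as well as on the orientations; a crossing change therefore flips $\epsilon(c)$. This is exactly the mechanism in the paper's proof for type A: a type A move is a simultaneous crossing change at two existing crossings interchanged by $\rho$ (no new crossings are created---your ``finger move'' picture misreads what a transverse double point of a regular homotopy does to the diagram). If those two crossings are bicolored they have the same sign by the $\rho$-symmetry, both signs flip, and $e(D)$ changes by $\pm4$; if they are unicolored, nothing happens. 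Your framework cannot produce this: a pair of crossings created by pushing a strand through another has cancelling signs, so your mechanism would force $e(D)$ to be unchanged under type A moves, contradicting both the lemma and the paper's $9_{40}$ example, where $e(D)$ jumps from $-4$ to $0$ after a single type A move.

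Two smaller points. For type C, your claim that ``both new crossings are bicolored'' by the geometry of the move is not correct: the two new crossings are mirror images across the axis, and depending on which portions of the strands through the fixed points cross, they are either both bicolored or both unicolored (the paper's proof treats both cases); also, the new crossings do not lie on $h$, so your sub-case about excluding one of them does not arise. Your type B argument is fine and agrees with the paper: the modified crossing lies on the axis and is excluded from the sum defining $e(D)$, while no other crossing is touched.
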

\begin{proof}
    A type A move is applied to two simultaneous crossings, which are mapped to each other by the involution $\rho$. In the case of unicolored crossings, the correction term is left unchanged. For bicolored crossings, notice that both crossings must have the same sign. If they were positive crossings before the move, they become negative crossings afterwards, and so the correction term changes by $+4$; for the opposite direction, it changes by $-4$.

    A type B move modifies a crossing on the axis of symmetry, and so it is not counted in the sum $e(D)$.

    Finally, a type C move creates two new crossings. Either they are both unicolored, in which case the correction term does not change, or they are bicolored, which changes the correction term by $\pm2$ depending on the sign of the new crossings.
\end{proof}

We are now ready to address the main part of Theorem \ref{mainthmB}.

We will show that in the type B case, the difference of signatures is either zero or $\pm2$.
Let $M_B$ be the Goeritz matrix associated to the diagram $D_B$ obtained by performing a type B crossing change on $D$. If the crossing change is performed on a crossing between the regions $a_n$ and $a'_n$, and if the original matrix associated to the diagram is
\[
M=\begin{bNiceArray}[first-row,first-col]{ccccccc}
    & a_1 & \cdots & a_n & a'_1 & \cdots & a'_n \\
    a_1 & & & & & & \\
    \vdots & & & & & & \\
    a_n & & & \alpha & & & \beta \\
    a'_1 & & & & & & \\
    \vdots & & & & & & \\
    a'_n & & & \beta & & & \alpha \\
\end{bNiceArray},
\]
then $M_B$ can be given as
\[
M_B=\begin{bNiceArray}[first-row,first-col]{ccccccc}
    & a_1 & \cdots & a_n & a'_1 & \cdots & a'_n \\
    a_1 & & & & & & \\
    \vdots & & & & & & \\
    a_n & & & \alpha\pm2 & & & \beta\mp2 \\
    a'_1 & & & & & & \\
    \vdots & & & & & & \\
    a'_n & & & \beta\mp2 & & & \alpha\pm2 \\
\end{bNiceArray},
\]
where the non-explicit coefficients are the same in both matrices.

The $+1$ and $-1$ eigenspaces of $\rho$ are in turn given by
\[
M^+=\begin{bNiceArray}[first-row,first-col]{cccc}
    & a_1-a'_1 & \cdots & a_n-a'_n \\
    a_1-a'_1 & & & \\
    \vdots & & P & \\
    a_n-a'_n & & & 2(\alpha-\beta) \\
\end{bNiceArray}\quad M^-=\begin{bNiceArray}[first-row,first-col]{cccc}
    & a_1+a'_1 & \cdots & a_n+a'_n \\
    a_1+a'_1 & & & \\
    \vdots & & Q & \\
    a_n+a'_n & & & 2(\alpha+\beta) \\
\end{bNiceArray}
\]

and

\[
M^+_B=\begin{bNiceArray}[first-row,first-col]{cccc}
    & a_1-a'_1 & \cdots & a_n-a'_n \\
    a_1-a'_1 & & & \\
    \vdots & & P & \\
    a_n-a'_n & & & 2(\alpha-\beta)\pm8 \\
\end{bNiceArray}\quad M^-_B=\begin{bNiceArray}[first-row,first-col]{cccc}
    & a_1+a'_1 & \cdots & a_n+a'_n \\
    a_1+a'_1 & & & \\
    \vdots & & Q & \\
    a_n+a'_n & & & 2(\alpha+\beta) \\
\end{bNiceArray}
\]
where $P$ and $Q$ are symmetric $(n-1)\times(n-1)$ matrices.

We will now use Theorem \ref{thm:jones} to compute the necessary signatures.
Note that $M^-=M^-_B$, such that we only need to find $\sigma$-series for $M^+$ and $M^+_B$; we can choose those as follow. We first pick $\Delta_1,\ldots,\Delta_{n-1}$ to be a $\sigma$-series for the matrix $P$. Then, $\Delta_1,\ldots,\Delta_{n-1},\Delta_n=M^+$ is a $\sigma$-series for $M^+$, and $\Delta_1,\ldots,\Delta_{n-1},\Delta^B_n=M^+_B$ is a $\sigma$-series for $M^+_B$. This is a proper choice since $M^+$ and $M^+_B$ are non-singular. This yields
\begin{align*}
    \sigma(M^+)&= \sum_{i=1}^{n-1} \sign\left(\det(\Delta_{i-1})\det(\Delta_i)\right) + \sign\left(\det(\Delta_{n-1})\det(M^+)\right)\\
    \sigma(M^+_B)&= \sum_{i=1}^{n-1} \sign\left(\det(\Delta_{i-1})\det(\Delta_i)\right) + \sign\left(\det(\Delta_{n-1})\det(M^+_B)\right)
\end{align*}

Now, having that $\Delta_{n-1}=P$, we have that the difference of signatures is given by
\begin{align*}
    \sigma(M^+)-\sigma(M^-)-(\sigma(M^+_B)-\sigma(M^-_B))&=\sigma(M^+)-\sigma(M^+_B)\\
    &=\sign\left(\det(P)\det(M^+)\right)-\sign\left(\det(P)\det(M^+_B)\right)\\
    &=\sign(\det(P))\left(\sign(\det(M^+))-\sign(\det(M^+_B))\right).
\end{align*}

\begin{proof}[Proof of Theorem \ref{mainthmB}]
    On one hand we have $\sign(\det(P))\in\{-1,0,1\}$, and since the determinants of $M^+$ and $M^+_B$ are both non-zero, $\sign(\det(M^+))-\sign(\det(M^+_B))$ has to be either zero or $\pm2$, such that the above difference is either zero or $\pm2$.
    
    Overall, since the correction term does not change, this means that the equivariant signature changes by either $0$ or $\pm2$ under a type B move, and so $\sigmaeq(K)-2\leq\sigmaeq(K')\leq\sigmaeq(K)+2$.
\end{proof}

\section{Proof of Theorem \ref{mainthmC}}

We can now show that the difference of signatures in the type C case is also either zero or $\pm2$. First note that there are two ways of applying a type C move as given in Figure \ref{fig:typeCmoves}, depending on the sign of the resulting crossings.
\begin{figure}[h!]
    \centering
    \includegraphics[width=0.7\textwidth]{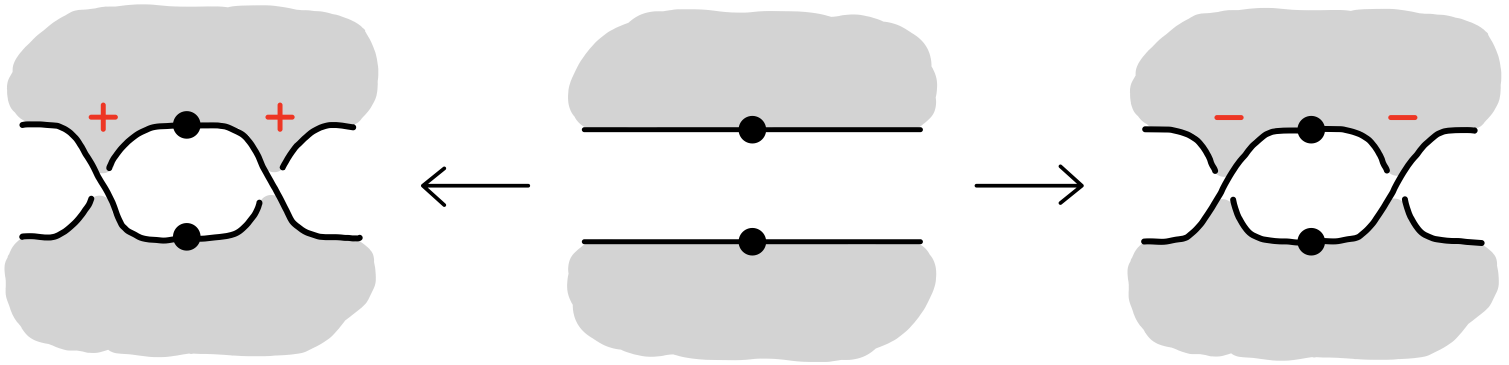}
    \caption{Two different type C moves: a positive move (left) and a negative move (right).}
    \label{fig:typeCmoves}
\end{figure}
We will focus on the positive case here, as the proof of the negative case proceeds in a similar fashion.

If $M$ is the $2n\times 2n$ Goeritz matrix before the type C move is applied, then the matrix $M_C$ after the move is an $(2n+2)\times(2n+2)$ matrix obtained from $M$ by adding two extra lines and columns as depicted here:

\[
M_C=\begin{bNiceArray}[first-row,first-col]{cccccccc}
    & a_1 & \cdots & a_n & a_{n+1} & a'_1 & \cdots & a'_n & a'_{n+1} \\
    a_1 & & & & & & & & \\
    \vdots & & & & v & & & & \\
    a_n & & & & & & & & \\
    a_{n+1} & & v^T & & 1-S & & & & \\
    a'_1 & & & & & & & & \\
    \vdots & & & & & & & & v \\
    a'_n & & & & & & & & \\
    a'_{n+1} & & & & & & v^T & & 1-S \\
\end{bNiceArray}
\]
where $v=\left[\sum_{k=1}^n a_1a_k,\ldots, \sum_{k=1}^n a_na_k\right]^T$ is a vector of size $n$, $S=\sum_{i=1}^n(a_i+a'_i)a_{n+1}$, and again the non-explicit coefficients are exactly the coefficients of $M$ (the term $a_i a_j$ is exactly $g_{ij}$, the $(i,j)$-entry of the Goertiz matrix). The matrices of the $+1$ and $-1$ eigenspaces are expressed as

\[
M^+_C=\begin{bNiceArray}[first-row,first-col]{ccccc}
    & a_1-a'_1 & \cdots & a_n-a'_n & a_{n+1}-a'_{n+1} \\
    a_1-a'_1 & & & & \\
    \vdots & & M^+ & & 2v \\
    a_n-a'_n & & & & \\
    a_{n+1}-a'_{n+1} & & 2v^T & & 2-2S \\
\end{bNiceArray},
\]
\[
M^-_C=\begin{bNiceArray}[first-row,first-col]{ccccc}
    & a_1+a'_1 & \cdots & a_n+a'_n & a_{n+1}+a'_{n+1} \\
    a_1+a'_1 & & & & \\
    \vdots & & M^- & & 2v \\
    a_n+a'_n & & & & \\
    a_{n+1}+a'_{n+1} & & 2v^T & & 2-2S \\
\end{bNiceArray}
\]

Consider now the two crossings created after the type C move, and consider their simultaneous resolution, choosing the resolution which always yields a split link; see Figure \ref{fig:typeCresolution} for an explicit diagram.
\begin{figure}[h!]
    \centering
    \includegraphics[width=0.45\textwidth]{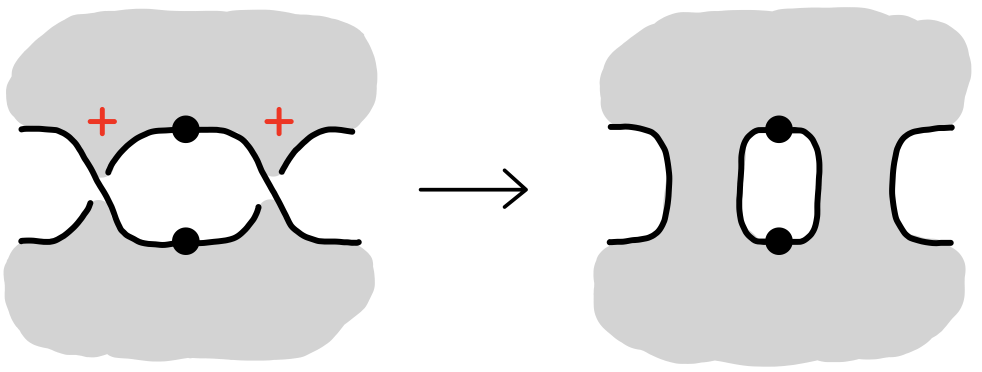}
    \caption{The resolution of the two new crossings, giving a split link.}
    \label{fig:typeCresolution}
\end{figure}

The Goeritz matrix $M_0$ of this link is given by
\[
M_0=\begin{bNiceArray}[first-row,first-col]{cccccccc}
    & a_1 & \cdots & a_n & a_{n+1} & a'_1 & \cdots & a'_n & a'_{n+1} \\
    a_1 & & & & & & & & \\
    \vdots & & & & v & & & & \\
    a_n & & & & & & & & \\
    a_{n+1} & & v^T & & -S & & & & \\
    a'_1 & & & & & & & & \\
    \vdots & & & & & & & & v \\
    a'_n & & & & & & & & \\
    a'_{n+1} & & & & & & v^T & & -S \\
\end{bNiceArray}
\]
such that we can write
\[
M_0^+=\begin{bNiceArray}[first-row,first-col]{ccccc}
    & a_1-a'_1 & \cdots & a_n-a'_n & a_{n+1}-a'_{n+1} \\
    a_1-a'_1 & & & & \\
    \vdots & & M^+ & & 2v \\
    a_n-a'_n & & & & \\
    a_{n+1}-a'_{n+1} & & 2v^T & & -2S \\
\end{bNiceArray},
\]
\[
M_0^-=\begin{bNiceArray}[first-row,first-col]{ccccc}
    & a_1+a'_1 & \cdots & a_n+a'_n & a_{n+1}+a'_{n+1} \\
    a_1+a'_1 & & & & \\
    \vdots & & M^- & & 2v \\
    a_n+a'_n & & & & \\
    a_{n+1}+a'_{n+1} & & 2v^T & & -2S \\
\end{bNiceArray}
\]

and so the determinants of $M_C^+$ and $M_C^-$ can be written as $\det(M_C^\pm)=\det(M_0^\pm)+2\det(M^\pm)$. We will make use of the following observation:
\begin{lem}
    With notations as above, we have $\det(M_0^-)=0$, and so $\det(M_C^-)=2\det(M^-)$.
\end{lem}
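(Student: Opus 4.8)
The plan is to exploit the split link $L_0$ appearing in Figure~\ref{fig:typeCresolution}. Since the Goeritz matrix of any diagram computes the determinant of the underlying link up to sign, and a split link has vanishing determinant (its Alexander polynomial is identically zero, equivalently its double branched cover has infinite first homology), we get $\det M_0=0$. Now run the congruence from the Remark, but in size $2(n+1)$: applying $\bigl(\begin{smallmatrix} I & -I\\ I & I\end{smallmatrix}\bigr)\,\cdot\,\bigl(\begin{smallmatrix} I & I\\ -I & I\end{smallmatrix}\bigr)$ block-diagonalizes $M_0$ as $\mathrm{diag}(M_0^+,M_0^-)$ up to the factor $4^{n+1}$, so $\det M_0^+\cdot\det M_0^- = 4^{n+1}\det M_0 = 0$. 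Hence at least one of $M_0^{\pm}$ is singular, and the whole content of the lemma is that the singular one is $M_0^-$.

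To pin this down I would produce an explicit kernel vector of $M_0^-$. Because $M^-$ is nonsingular (Remark), the bordered matrix $M_0^-=\bigl(\begin{smallmatrix} M^- & 2v\\ 2v^\top & -2S\end{smallmatrix}\bigr)$ is singular exactly when its Schur complement $-2S-4\,v^\top(M^-)^{-1}v$ vanishes, in which case $\bigl(-2(M^-)^{-1}v,\,1\bigr)$ is a kernel vector; so the point is the scalar identity $S+2\,v^\top(M^-)^{-1}v=0$. This should come out by unwinding the definitions of $v$ and $S$ from the type C picture: $v$ records the incidences of the new region $a_{n+1}$ with the old white regions $a_1,\dots,a_n$, $S$ is the induced diagonal contribution, and these are tied to the $M^-$-block by the Goeritz diagonal relation forced by the (deleted, $\rho$-invariant) region $r_0$, which yields a row-sum identity $M^-\mathbf{1}=-2c$ for an appropriate incidence vector $c$. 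Feeding these relations into the Schur complement should make it collapse to $0$. As a cross-check — and as an alternative route that sidesteps the kernel-vector computation — one can instead try to show that $M_0^+$ is nonsingular, using that the non-trivial split component of $L_0$ is an honest knot (so its $+1$-eigenspace Goeritz form is nonsingular by the Remark), and then conclude $\det M_0^-=0$ from the displayed product formula.

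The step I expect to be the main obstacle is making the geometry of the type C move precise enough to nail the scalar identity: one has to read off from Figures~\ref{fig:typeCmoves} and \ref{fig:typeCresolution} exactly which old regions the two new crossings and the new region $a_{n+1}$ touch, and with which incidence signs $\eta$, so that the combinatorial relations feeding the Schur complement are exactly right. Once those relations are in place the rest is routine bookkeeping: the input $\det M_0=0$ is immediate from splitness, and the block-diagonalization is literally the one already used in the Remark.
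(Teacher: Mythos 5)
Your opening reduction is correct as far as it goes: the congruence from the Remark, applied in size $2(n+1)$, gives $\det M_0^+\cdot\det M_0^-=4^{n+1}\det M_0=0$, so at least one of the two factors vanishes. But, as you say yourself, ``the whole content of the lemma is that the singular one is $M_0^-$,'' and that is precisely the step you do not carry out. The Schur-complement identity $S+2v^T(M^-)^{-1}v=0$ is asserted to ``come out by unwinding the definitions'' and flagged as the main obstacle, which leaves the proof incomplete exactly where the lemma has content. Your fallback route is also not available as stated: the Remark's nonsingularity argument uses that a \emph{reduced} Goeritz matrix of a \emph{knot} has $|\det M|=\det(K)$ odd, whereas $M_0$ belongs to a split-link diagram and (see below) is not a reduced Goeritz matrix at all, so you cannot quote the Remark to conclude that $M_0^+$ is invertible without substantial extra work.

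The missing observation, which is what the paper's proof runs on, is structural rather than computational: the chosen resolution absorbs the deleted white region (the one containing $h'$), so $M_0$ is exactly the \emph{unreduced} matrix $G'$ of the resolved diagram, and therefore every row and column of $M_0$ sums to zero. In particular the all-ones vector $\sum_{i}(a_i+a'_i)$ lies in the kernel of $M_0$; since $\rho(a_i)=-a'_i$, this vector is anti-invariant, i.e.\ it lives in $E_-$, and restricting gives that the rows of $M_0^-$ sum to zero (the paper phrases this as the last row being minus the sum of the others). Hence $\det M_0^-=0$ directly, with no case analysis. Note that this also explains the asymmetry you were trying to pin down: it is the $\rho$-anti-invariance of the all-ones kernel vector, not any property of the split components, that singles out $M_0^-$; and your ``$\det M_0=0$ because the link is split'' is true but for the more elementary reason that $M_0=G'$. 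Your Schur-complement identity is a consequence of this row-sum relation, but verifying it by chasing incidence signs in the figures, as you propose, would be far more painful than noting the row-sum property once.
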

\begin{proof}
    Recall that the Goeritz matrix $G$ associated to a knot diagram is obtained from a matrix $G'$ by deleting the $i$-th row and $i$-th column. Note that $M_0$ is exactly $G'$, such that each of this matrix's columns sum up to zero, and so $\sum_{k=1}^{n+1}a_i(a_k+a'_k)=0$ for each $1\leq i\leq n+1$ -- note that this is also the case when replacing $a_i$ by $a'_i$.

    Now, we will focus on the last row of the matrix $M_0^-$, namely we will compute the coefficients $(a_i+a'_i)(a_{n+1}+a'_{n+1})$ for every $1\leq i\leq n+1$. If $i\neq n+1$, then
    \begin{align*}
        (a_i+a'_i)(a_{n+1}+a'_{n+1})&=a_ia_{n+1}+a_ia'_{n+1}+a'_ia'_{n+1}+a'_ia_{n+1}\\
        &=2a_i(a_{n+1}+a'_{n+1})=-2\sum_{k=1}^n a_i(a_k+a'_k)\\
        &=-\sum_{k=1}^n (a_i+a'_i)(a_k+a'_k)
    \end{align*}
    since $a_i(a_{n+1}+a'_{n+1})=-\sum_{k=1}^{n}a_i(a_k+a'_k)$ and $a_i(a_k+a'_k)=a'_i(a_k+a'_k)$ by symmetry of the diagram of $K$.

    As for the diagonal coefficient, we have
    \begin{align*}
        (a_{n+1}+a'_{n+1})^2&=2a^2_{n+1}+2a_{n+1}a'_{n+1}=2a_{n+1}(a_{n+1}+a'_{n+1})=-2\sum_{k=1}^n a_{n+1}(a_k+a'_k)\\
        &=-\sum_{k+1}^n (a_{n+1}+a'_{n+1})(a_k+a'_k)=-2\sum_{k=1}^n (a_{n+1}+a'_{n+1})a_k\\
        &=-2\sum_{k=1}^n \left(-\sum_{i=1}^n a_k(a_i+a'_i)\right)=2\sum_{k=1}^n \sum_{i=1}^n a_k(a_i+a'_i)\\
        &=\sum_{k=1}^n \sum_{i=1}^n (a_k+a'_k)(a_i+a'_i).
    \end{align*}

    In other words, we have found that the last row of $M_0^-$ is the negative of the sum of all the other rows, so its determinant must be zero.
\end{proof}

Going back to the signature computation, we first pick a $\sigma$-series $\Delta_1,\ldots,\Delta_n$ for $M^+$ and $\Gamma_1,\ldots,\Gamma_n$ for $M^-$. Since $M^+$, $M^-$, $M^+_C$ and $M^-_C$ are non-singular, by considering $\Delta_{n+1}=M^+_C$ and $\Gamma_{n+1}=M^-_C$, we find $\sigma$-series for $M^+_C$ and $M^-_C$. This means that

\begin{align*}
    \sigma(M^+_C)&= \sigma(M^+) + \sign\left(\det(M^+)\det(M^+_C)\right)\\
    \sigma(M^-_C)&= \sigma(M^-) + \sign\left(\det(M^-)\det(M^-_C)\right)
\end{align*}
and so
\begin{align*}
    &\sigma(M^+)-\sigma(M^-)-\left(\sigma(M^+_C)-\sigma(M^-_C)\right)\\
    =&\sign\left(\det(M^-)\det(M^-_C)\right)-\sign\left(\det(M^+)\det(M^+_C)\right)\\
    =&\sign\left(2\det(M^-)^2)\right)-\sign\left(\det(M^+)\det(M^+_C)\right)\\
    =&1-\sign\left(\det(M^+)\det(M^+_C)\right).
\end{align*}

\begin{proof}[Proof of Theorem \ref{mainthmC}]
    Since $M^+$ and $M_C^+$ have non-zero determinant, we find that the above quantity has to be either $0$ or $2$.
    
    Overall, since $e(D)$ either remains unchanged or changes by $-2$ in this case, this means that the equivariant signature changes by either $0$ or $2$ if the new crossings are unicolored, and changes by $0$ or $-2$ if they are bicolored. The same proof is valid for the negative type C move, except that in this case the equivariant signature changes by $0$ or $-2$ if the new crossings are unicolored, and by $0$ or $2$ if they are bicolored, and so overall, we find that $\sigmaeq(K)-2\leq\sigmaeq(K')\leq\sigmaeq(K)+2$.
\end{proof}

\section{Proof of Theorem \ref{mainthmA} in the case of a type A1 move}

Note that in the case of a type A move, either the crossing changes happen between a marked region, say $a_n$ (and $a'_n$), and the region containing the direction, in which case we will call this a type A1 move, or the crossing changes happen between two marked regions, say $a_{n-1}$ and $a_n$ (and the same for their symmetric counterparts), which we call a type A2 move. We will treat the two cases separately.

First for a type A1 move, if the original matrix is written as
\[
M=\begin{bNiceArray}[first-row,first-col]{ccccccc}
    & a_1 & \cdots & a_n & a'_1 & \cdots & a'_n \\
    a_1 & & & & & & \\
    \vdots & & & & & & \\
    a_n & & & \alpha & & & \beta \\
    a'_1 & & & & & & \\
    \vdots & & & & & & \\
    a'_n & & & \beta & & & \alpha \\
\end{bNiceArray},
\]
we obtain the following matrices for the $+1$ and $-1$ eigenspaces of the involution:
\[
M^+=\begin{bNiceArray}[first-row,first-col]{cccc}
    & a_1-a'_1 & \cdots & a_n-a'_n \\
    a_1-a'_1 & P & & \\
    \vdots & & & \\
    a_n-a'_n & & & 2(\alpha-\beta) \\
\end{bNiceArray},\; M^-=\begin{bNiceArray}[first-row,first-col]{cccc}
    & a_1+a'_1 & \cdots & a_n+a'_n \\
    a_1+a'_1 & Q & & \\
    \vdots & & & \\
    a_n+a'_n & & & 2(\alpha+\beta) \\
\end{bNiceArray},
\]
and the $2n\times 2n$ matrix $M_A$ associated to the diagram after a type A1 move can be given as

\[
M_A=\begin{bNiceArray}[first-row,first-col]{ccccccc}
    & a_1 & \cdots & a_n & a'_1 & \cdots & a'_n \\
    a_1 & & & & & & \\
    \vdots & & & & & & \\
    a_n & & & \alpha\pm2 & & & \beta \\
    a'_1 & & & & & & \\
    \vdots & & & & & & \\
    a'_n & & & \beta & & & \alpha\pm2 \\
\end{bNiceArray},
\]
such that
\[
M^+_A=\begin{bNiceArray}[first-row,first-col]{cccc}
    & a_1-a'_1 & \cdots & a_n-a'_n \\
    a_1-a'_1 & P & & \\
    \vdots & & & \\
    a_n-a'_n & & & 2(\alpha-\beta)\pm4 \\
\end{bNiceArray}, \; M^-_A=\begin{bNiceArray}[first-row,first-col]{cccc}
    & a_1+a'_1 & \cdots & a_n+a'_n \\
    a_1+a'_1 & Q & & \\
    \vdots & & & \\
    a_n+a'_n & & & 2(\alpha+\beta)\pm4 \\
\end{bNiceArray},
\]
where $P$ and $Q$ are two $(n-1)\times(n-1)$ symmetric matrices given as follows. Consider the two crossings which are flipped by the type A1 move. Consider their resolution which merges the unmarked region with the two regions $a_n$ and $a'_n$. Note that this basically amounts to undoing a type C move as in Figure \ref{fig:typeCmoves}.
If $M_1$ denotes the Goeritz matrix obtained after this resolution, then $M_1$ is obtained from $M$ by deleting the $n$-th and $2n$-th row and columns, such that $P=M_1^+$ and $Q=M_1^-$.

Since $P$ and $Q$ are nonsingular, we can pick a $\sigma$-series for each of them. If we pick $\Delta_1,\ldots,\Delta_{n-1}$ to be a $\sigma$-series for $P$ and $\Gamma_1,\ldots,\Gamma_{n-1}$ a $\sigma$-series for $Q$, we can pick $\Delta_n$ to be either $M^+$ or $M^+_A$ and $\Gamma_n$ to be either $M^-$ or $M^-_A$ to get $\sigma$-series for all the four matrices. We can now compute:
\begin{align*}
    &\sigma(M^+)-\sigma(M^-)-(\sigma(M^+_A)-\sigma(M^-_A))\\
    =&\sign\left(\det(P)\right)\sign\left(\det(M^+)\det(M^+_A)\right)-\sign\left(\det(Q)\right)\sign\left(\det(M^-)\det(M^-_A)\right)
\end{align*}
\begin{proof}[Proof of Theorem \ref{mainthmA} for type A1 moves]
    The four matrices have non-zero determinant, such that both terms of the subtraction are either $0$ or $\pm1$, and so the whole quantity is either $0$, $\pm1$ or $\pm2$.

Since the correction term changes by either $0$ or $\pm4$, we find that a type A1 move changes the signature by $0$, $\pm2$, $\pm4$, or $\pm6$.
\end{proof}

\section{Proof of Theorem \ref{mainthmA} in the case of a type A2 move}

For this type of unknotting move, and if the original Goeritz matrix is written as
\[
M=\begin{bNiceArray}[first-row,first-col]{ccccccccc}
    & a_1 & \cdots & a_{n-1} & a_n & a'_1 & \cdots & a'_{n-1} & a'_n \\
    a_1 & & & & & & & & \\
    \vdots & & & & & & & & \\
    a_{n-1} & & & \alpha & \gamma & & & \theta & \\
    a_n & & & \gamma & \beta & & & & \delta \\
    a'_1 & & & & & & & & \\
    \vdots & & & & & & & & \\
    a'_{n-1} & & & \theta & & & & \alpha & \gamma \\
    a'_n & & & & \delta & & & \gamma & \beta \\
\end{bNiceArray}
\]
then $M_A$ can be given as
\[
M_A=\begin{bNiceArray}[first-row,first-col]{ccccccccc}
    & a_1 & \cdots & a_{n-1} & a_n & a'_1 & \cdots & a'_{n-1} & a'_n \\
    a_1 & & & & & & & & \\
    \vdots & & & & & & & & \\
    a_{n-1} & & & \alpha\pm 2 & \gamma\mp 2 & & & \theta & \\
    a_n & & & \gamma\mp 2 & \beta\pm 2 & & & & \delta \\
    a'_1 & & & & & & & & \\
    \vdots & & & & & & & & \\
    a'_{n-1} & & & \theta & & & & \alpha\pm 2 & \gamma\mp 2 \\
    a'_n & & & & \delta & & & \gamma\mp 2 & \beta\pm 2 \\
\end{bNiceArray}
\]
depending on the sign of the crossing changes.

This means that the $+1$ and $-1$-eigenspaces can be given as
\[
M^+=\begin{bNiceArray}[first-row,first-col]{ccccc}
    & a_1-a'_1 & \cdots & a_{n-1}-a'_{n-1} & a_n-a'_n \\
    a_1-a'_1 &  & & & \\
    \vdots & & P & & \\
    a_{n-1}-a'_{n-1} & & & 2(\alpha-\theta) & 2\gamma \\
    a_n-a'_n & & & 2\gamma & 2(\beta-\delta) \\
\end{bNiceArray},
\]
\[
M^-=\begin{bNiceArray}[first-row,first-col]{ccccc}
    & a_1+a'_1 & \cdots & a_{n-1}+a'_{n-1} & a_n+a'_n \\
    a_1+a'_1 &  & & & \\
    \vdots & & Q & & \\
    a_{n-1}+a'_{n-1} & & & 2(\alpha+\theta) & 2\gamma \\
    a_n+a'_n & & & 2\gamma & 2(\beta+\delta) \\
\end{bNiceArray}
\]
and
\[
M^+_A=\begin{bNiceArray}[first-row,first-col]{ccccc}
    & a_1-a'_1 & \cdots & a_{n-1}-a'_{n-1} & a_n-a'_n \\
    a_1-a'_1 &  & & & \\
    \vdots & & P & & \\
    a_{n-1}-a'_{n-1} & & & 2(\alpha-\theta)\pm 4 & 2\gamma\mp 4 \\
    a_n-a'_n & & & 2\gamma\mp 4 & 2(\beta-\delta)\pm 4 \\
\end{bNiceArray},
\]
\[
M^-_A=\begin{bNiceArray}[first-row,first-col]{ccccc}
    & a_1-a'_1 & \cdots & a_{n-1}-a'_{n-1} & a_n-a'_n \\
    a_1-a'_1 &  & & & \\
    \vdots & & Q & & \\
    a_{n-1}-a'_{n-1} & & & 2(\alpha+\theta)\pm 4 & 2\gamma\mp 4 \\
    a_n-a'_n & & & 2\gamma\mp 4 & 2(\beta+\delta)\pm 4 \\
\end{bNiceArray}
\]
where $P$ and $Q$ are two $(n-2)\times(n-2)$ symmetric matrices.
\begin{remark}
    Notice that the matrices satisfy $M_A^+=M^+ \pm4uu^T$ and $M_A^-=M^- \pm4uu^T$, where $u$ is a vector of size $n$ equal to $e_{n-1}-e_n$. Additionally, the matrix $4uu^T$ is of rank $1$.
\end{remark}
We can see that this is the first case where more than one coefficient is modified, such that we cannot apply our usual method for this case. However, since we are dealing with a rank-one perturbation, we can still estimate how the signature changes.

Let us consider the specific case where the type A2 move changes two negative crossings to positive (where the orientation considered is the one induced by the black regions). This means that we can write $M_A^+=M^++4uu^T$, where $u=e_{n-1}-e_n$ (and the same for $M^-$). We can use a consequence of Weyl's inequality to understand how the eigenvalues of $M_A^+$ are related to those of $M^+$ (see for example \cite[Theorem 4.3.1]{HJ13}):

\begin{prop}
    Let $A,B$ be two $n\times n$ Hermitian matrices. Then:
    \begin{gather*}
        \lambda_i(A+B)\leq\lambda_{i+j}(A)+\lambda_{n-j}(B), \qquad j=0,1,\ldots,n-i\\
        \lambda_{i-j+1}(A)+\lambda_j(B)\leq\lambda_i(A+B), \qquad j=1,\ldots,i
    \end{gather*}
    for each $i=1,\ldots,n$, where $\{\lambda_i(A)\}_{i=1}^n$,$\{\lambda_i(B)\}_{i=1}^n$, and $\{\lambda_i(A+B)\}_{i=1}^n$ denote the set of eigenvalues of $A$, $B$, and $A+B$, arranged in algebraically nondecreasing order.
\end{prop}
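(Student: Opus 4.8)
The statement is the classical pair of Weyl inequalities for Hermitian matrices (it is exactly \cite[Theorem 4.3.1]{HJ13}), so the plan is to derive both families from the Courant--Fischer min--max description of eigenvalues together with an elementary dimension count; I will only sketch it.

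First I would record the Courant--Fischer formulas: for an $n\times n$ Hermitian matrix $M$ with eigenvalues $\lambda_1(M)\le\cdots\le\lambda_n(M)$ and any $1\le k\le n$,
\[
\lambda_k(M)=\min_{\dim V=k}\ \max_{0\neq x\in V}\frac{x^{*}Mx}{x^{*}x}=\max_{\dim W=n-k+1}\ \min_{0\neq x\in W}\frac{x^{*}Mx}{x^{*}x},
\]
the extrema being over subspaces $V,W\subseteq\mathbb{C}^n$; this follows in a line from the spectral theorem.

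For the first inequality, fix $i\in\{1,\ldots,n\}$ and $j\in\{0,\ldots,n-i\}$. Let $S_A$ be the span of an orthonormal set of eigenvectors of $A$ attached to its $i+j$ smallest eigenvalues, so $\dim S_A=i+j$ and $x^{*}Ax\le\lambda_{i+j}(A)\,x^{*}x$ on $S_A$; let $S_B$ be the span of eigenvectors of $B$ attached to its $n-j$ smallest eigenvalues, so $\dim S_B=n-j$ and $x^{*}Bx\le\lambda_{n-j}(B)\,x^{*}x$ on $S_B$. Since $\dim(S_A\cap S_B)\ge(i+j)+(n-j)-n=i$, there is an $i$-dimensional subspace $V\subseteq S_A\cap S_B$, and every $x\in V$ satisfies $x^{*}(A+B)x=x^{*}Ax+x^{*}Bx\le(\lambda_{i+j}(A)+\lambda_{n-j}(B))\,x^{*}x$. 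Feeding $V$ into the min--max formula for $\lambda_i(A+B)$ gives $\lambda_i(A+B)\le\lambda_{i+j}(A)+\lambda_{n-j}(B)$. The second inequality follows either by running the same intersection argument with the max--min form of Courant--Fischer, or by applying the first inequality to $-A$ and $-B$, using $\lambda_k(-M)=-\lambda_{n-k+1}(M)$, and performing the re-indexing $i'=n-i+1,\ j'=j-1$.

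I do not expect a genuine obstacle here; the only point requiring care is the index bookkeeping — checking that $i+j$ and $n-j$ lie in $\{1,\ldots,n\}$ over the stated range of $j$ (they do, since $i\ge1$), that repeated eigenvalues cause no trouble once one fixes an orthonormal eigenbasis, and that the re-indexing in the last step lands precisely on the asserted ranges $j\in\{1,\ldots,i\}$. As a sanity check, $j=0$ in the first inequality recovers $\lambda_i(A+B)\le\lambda_i(A)+\lambda_n(B)$, while the small-$j$ cases (applied with $B$ a semidefinite rank-one matrix, as in the type A2 perturbation $B=\pm4uu^{T}$) specialize to the interlacing bounds that will be needed downstream.
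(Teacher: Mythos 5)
Your argument is correct: the intersection-of-eigenspaces argument via Courant--Fischer is the standard textbook proof of Weyl's inequalities, and your index bookkeeping (including the reduction of the second family to the first via $A\mapsto -A$, $B\mapsto -B$ and $\lambda_k(-M)=-\lambda_{n-k+1}(M)$) checks out. Note, however, that the paper does not prove this proposition at all --- it simply quotes it as \cite[Theorem 4.3.1]{HJ13} and immediately specializes to $B=\pm 4uu^{T}$ --- so there is no in-paper proof to compare against; your writeup is a correct self-contained substitute for that citation.
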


In our case, $A=M^+$ and $B=4uu^T$, and the set of eigenvalues of $4uu^T$ is given by $\lambda_i(4uu^T)=0$ for $1\leq i\leq n-1$, and $\lambda_n(4uu^T)=8$. Therefore, this proposition restates as:
\[
\lambda_i(M^+)\leq\lambda_i(M^++4uu^T)\leq\lambda_{i+1}(M^+), \qquad i=1,\dots,n
\]
and so only one eigenvalue can cross zero as we move from $M^+$ to $M^++4uu^T$.

Now, the perturbation $4uu^T$ is positive semidefinite, which means that the positive index cannot decrease. One way to see this is by considering the bilinear forms $q(x)=x^TM^+x$ and $q_A(x)=x^TM_A^+x=q(x)+4(x^Tu)^2$: if $x$ is a direction where $q(x)>0$, then $q_A(x)\geq q(x)>0$ and so $x$ is also a positive direction for $q_A$, \textit{i.e.} no positive direction can become negative.

\begin{proof}[Proof of Theorem \ref{mainthmA} for type A2 moves]
In summary, we have found that either no eigenvalue crosses zero, and so the signature does not change, or there is exactly one eigenvalue crossing zero, where in that case the positive index increases by one.
More precisely, we can write the determinant as:
\begin{align*}
    \det(M^++4uu^T)&=\det(M^+(I_n+4(M^+)^{-1}uu^T))\\
    &=\det(M^+)\det(I_n+4(M^+)^{-1}uu^T)\\
    &=\det(M^+)\det(I_n+4u^T(M^+)^{-1}u)\\
    &=\det(M^+)(1+4u^T(M^+)^{-1}u)
\end{align*}
since $\det(I_m+AB)=\det(I_k+BA)$ for any two matrices $A$ and $B$ of sizes respectively $m\times k$ and $k\times m$. With this expression, the number of positive eigenvalues of $M_A^+$ is equal to $k+1$ if $1+4u^T(M^+)^{-1}u<0$, and to $k$ if $1+4u^T(M^+)^{-1}u>0$ (note that the case zero does not happen since $M_A^+$ is nonsingular).

Now, the same argument can be applied to $M^-$, and so the difference of signatures satisfies $\lvert\sigma(M^+)-\sigma(M^-)-(\sigma(M^+_A)-\sigma(M^-_A))\rvert\leq 2$. Since the correction term changes by at most $4$, a type A2 move changes the signature by at most $6$.

Note that the same applies when the type A2 move changes two positive crossings to negative (so when $M_A^+=M^+-4uu^T$), because in that case we can write $M^+=M_A^++4uu^T$ and apply the same argument as before.
This concludes the proof of Theorem \ref{mainthmA}.
\end{proof}

\section{Examples}

In this section, we will give examples of type A, B, and C moves on some given knots, which either do not change the equivariant signature, or change it by the maximal possible quantity in each case.

\begin{exa}[The type A bound is tight]
    Consider the admissible diagram for the knot $9_{40}$ given in Figure \ref{fig:9-40}.

\begin{figure}[ht!]
    \def\svgwidth{0,3\columnwidth}
    \centering
    %
    %% Creator: Inkscape 1.2.2 (732a01da63, 2022-12-09), www.inkscape.org
%% PDF/EPS/PS + LaTeX output extension by Johan Engelen, 2010
%% Accompanies image file '9-40.pdf' (pdf, eps, ps)
%%
%% To include the image in your LaTeX document, write
%%   \input{<filename>.pdf_tex}
%%  instead of
%%   \includegraphics{<filename>.pdf}
%% To scale the image, write
%%   \def\svgwidth{<desired width>}
%%   \input{<filename>.pdf_tex}
%%  instead of
%%   \includegraphics[width=<desired width>]{<filename>.pdf}
%%
%% Images with a different path to the parent latex file can
%% be accessed with the `import' package (which may need to be
%% installed) using
%%   \usepackage{import}
%% in the preamble, and then including the image with
%%   \import{<path to file>}{<filename>.pdf_tex}
%% Alternatively, one can specify
%%   \graphicspath{{<path to file>/}}
%% 
%% For more information, please see info/svg-inkscape on CTAN:
%%   http://tug.ctan.org/tex-archive/info/svg-inkscape
%%
\begingroup%
  \makeatletter%
  \providecommand\color[2][]{%
    \errmessage{(Inkscape) Color is used for the text in Inkscape, but the package 'color.sty' is not loaded}%
    \renewcommand\color[2][]{}%
  }%
  \providecommand\transparent[1]{%
    \errmessage{(Inkscape) Transparency is used (non-zero) for the text in Inkscape, but the package 'transparent.sty' is not loaded}%
    \renewcommand\transparent[1]{}%
  }%
  \providecommand\rotatebox[2]{#2}%
  \newcommand*\fsize{\dimexpr\f@size pt\relax}%
  \newcommand*\lineheight[1]{\fontsize{\fsize}{#1\fsize}\selectfont}%
  \ifx\svgwidth\undefined%
    \setlength{\unitlength}{438.18784044bp}%
    \ifx\svgscale\undefined%
      \relax%
    \else%
      \setlength{\unitlength}{\unitlength * \real{\svgscale}}%
    \fi%
  \else%
    \setlength{\unitlength}{\svgwidth}%
  \fi%
  \global\let\svgwidth\undefined%
  \global\let\svgscale\undefined%
  \makeatother%
  \begin{picture}(1,1.17664276)%
    \lineheight{1}%
    \setlength\tabcolsep{0pt}%
    \put(0,0){\includegraphics[width=\unitlength,page=1]{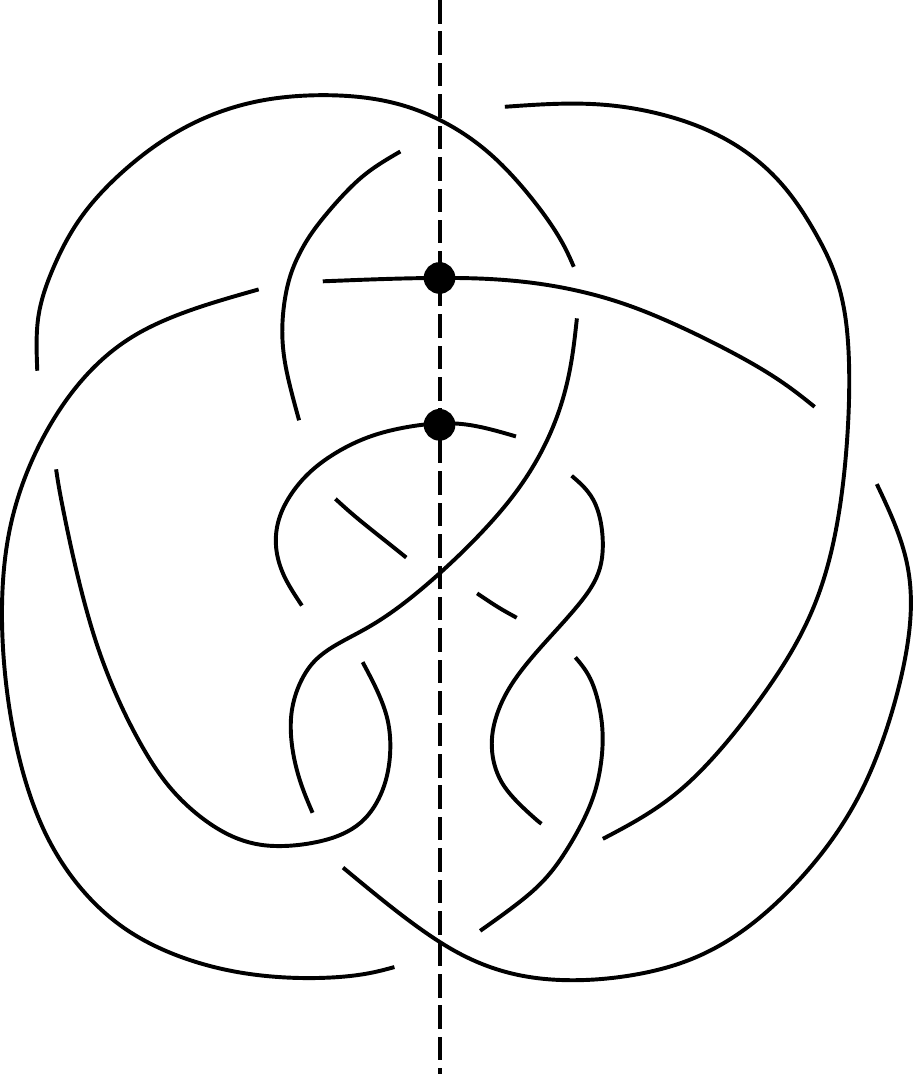}}%
    \put(0.17251957,0.1851126){\makebox(0,0)[lt]{\lineheight{1.25}\smash{\begin{tabular}[t]{l}$a$\end{tabular}}}}%
    \put(0.69626807,0.1851126){\makebox(0,0)[lt]{\lineheight{1.25}\smash{\begin{tabular}[t]{l}$a'$\end{tabular}}}}%
    \put(0.3505255,0.33573329){\makebox(0,0)[lt]{\lineheight{1.25}\smash{\begin{tabular}[t]{l}$b$\end{tabular}}}}%
    \put(0.57303292,0.33573329){\makebox(0,0)[lt]{\lineheight{1.25}\smash{\begin{tabular}[t]{l}$b'$\end{tabular}}}}%
    \put(0.34710231,0.53770157){\makebox(0,0)[lt]{\lineheight{1.25}\smash{\begin{tabular}[t]{l}$c$\end{tabular}}}}%
    \put(0.55934019,0.53427838){\makebox(0,0)[lt]{\lineheight{1.25}\smash{\begin{tabular}[t]{l}$c'$\end{tabular}}}}%
    \put(0.17594276,0.89371343){\makebox(0,0)[lt]{\lineheight{1.25}\smash{\begin{tabular}[t]{l}$d$\end{tabular}}}}%
    \put(0.70653763,0.89371343){\makebox(0,0)[lt]{\lineheight{1.25}\smash{\begin{tabular}[t]{l}$d'$\end{tabular}}}}%
  \end{picture}%
\endgroup%

    \caption{An admissible diagram for the knot $9_{40}$.}
    \label{fig:9-40}
\end{figure}

    The Goeritz matrix is given by
    \[
M=\begin{bNiceArray}[first-row,first-col]{cccccccc}
    & a & b & c & d & a' & b' & c' & d' \\
    a & -3 & 1 & 0 & 1 & 1 & 0 & 0 & 0 \\
    b & 1 & -2 & 1 & 0 & 0 & 0 & 0 & 0 \\
    c & 0 & 1 & -1 & 0 & 0 & 0 & -1 & 0 \\
    d & 1 & 0 & 0 & -3 & 0 & 0 & 0 & 1 \\
    a' & 1 & 0 & 0 & 0 & -3 & 1 & 0 & 1 \\
    b' & 0 & 0 & 0 & 0 & 1 & -2 & 1 & 0 \\
    c' & 0 & 0 & -1 & 0 & 0 & 1 & -1 & 0 \\
    d' & 0 & 0 & 0 & 1 & 1 & 0 & 0 & -3 \\
\end{bNiceArray}
\]
and
\[
M^+=\begin{bNiceArray}[first-row,first-col]{ccccc}
    & a-a' & b-b' & c-c' & d-d' \\
    a-a' & -8 & 2 & 0 & 2 \\
    b-b' & 2 & -4 & 2 & 0 \\
    c-c' & 0 & 2 & 0 & 0 \\
    d-d' & 2 & 0 & 0 & -8 \\
\end{bNiceArray},\,
M^-=\begin{bNiceArray}[first-row,first-col]{ccccc}
    & a+a' & b+b' & c+c' & d+d' \\
    a+a' & -4 & 2 & 0 & 2 \\
    b+b' & 2 & -4 & 2 & 0 \\
    c+c' & 0 & 2 & -4 & 0 \\
    d+d' & 2 & 0 & 0 & -4 \\
\end{bNiceArray}.
\]
In this case, we have $\sigma(M^+)=-2$, $\sigma(M^-)=-4$, and $e(D)=-4$, such that the equivariant signature is $\sigmaeq(9_{40}b^-)=-2-(-4)-(-4)=6$.

We perform two consecutive type A moves as given by Figure \ref{fig:9-40typeA}, and denote by $M_1$ (resp. $M_2$) the matrix obtained after performing one (resp. two) type A moves.

\begin{figure}[ht!]
    \def\svgwidth{0,9\columnwidth}
    \centering
    \import{./images/}{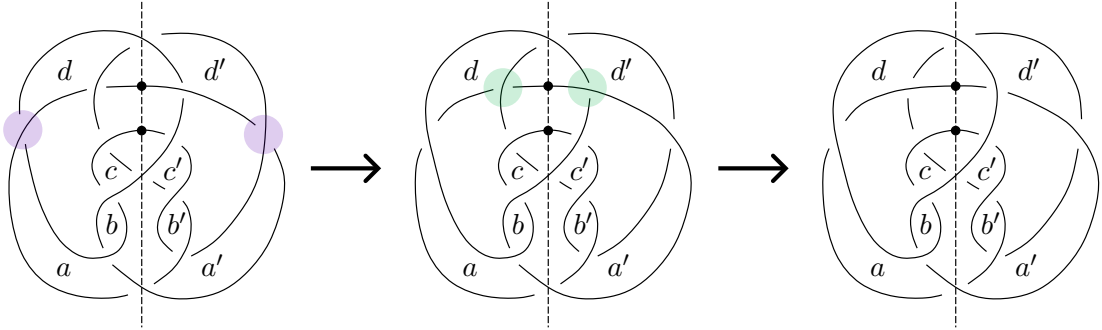}

    \caption{Two consecutive type A moves applied on the highlighted crossings of the diagram of $9_{40}$ (left), giving first $m(3_1)\#m(3_1)$ (center) and then the unknot (right).}
    \label{fig:9-40typeA}
\end{figure}

After one type A move we obtain the matrices
\[
M_1=\begin{bNiceArray}[first-row,first-col]{cccccccc}
    & a & b & c & d & a' & b' & c' & d' \\
    a & -1 & 1 & 0 & -1 & 1 & 0 & 0 & 0 \\
    b & 1 & -2 & 1 & 0 & 0 & 0 & 0 & 0 \\
    c & 0 & 1 & -1 & 0 & 0 & 0 & -1 & 0 \\
    d & -1 & 0 & 0 & -1 & 0 & 0 & 0 & 1 \\
    a' & 1 & 0 & 0 & 0 & -1 & 1 & 0 & -1 \\
    b' & 0 & 0 & 0 & 0 & 1 & -2 & 1 & 0 \\
    c' & 0 & 0 & -1 & 0 & 0 & 1 & -1 & 0 \\
    d' & 0 & 0 & 0 & 1 & -1 & 0 & 0 & -1 \\
\end{bNiceArray}
\]
and
\[
M_1^+=\begin{bNiceArray}[first-row,first-col]{ccccc}
    & a-a' & b-b' & c-c' & d-d' \\
    a-a' & -4 & 2 & 0 & -2 \\
    b-b' & 2 & -4 & 2 & 0 \\
    c-c' & 0 & 2 & 0 & 0 \\
    d-d' & -2 & 0 & 0 & -4 \\
\end{bNiceArray},\,
M_1^-=\begin{bNiceArray}[first-row,first-col]{ccccc}
    & a+a' & b+b' & c+c' & d+d' \\
    a+a' & 0 & 2 & 0 & -2 \\
    b+b' & 2 & -4 & 2 & 0 \\
    c+c' & 0 & 2 & -4 & 0 \\
    d+d' & -2 & 0 & 0 & 0 \\
\end{bNiceArray}.
\]

This yields $\sigma(M_1^+)=-2$, $\sigma(M_1^-)=-2$, and $e(D)=0$, such that $\sigmaeq(3_1\#r3_1)=0$, the connected sum of the trefoil with its reverse. Finally, after the second type A move, we obtain

\[
M_2=\begin{bNiceArray}[first-row,first-col]{cccccccc}
    & a & b & c & d & a' & b' & c' & d' \\
    a & -1 & 1 & 0 & -1 & 1 & 0 & 0 & 0 \\
    b & 1 & -2 & 1 & 0 & 0 & 0 & 0 & 0 \\
    c & 0 & 1 & -1 & 0 & 0 & 0 & -1 & 0 \\
    d & -1 & 0 & 0 & 1 & 0 & 0 & 0 & 1 \\
    a' & 1 & 0 & 0 & 0 & -1 & 1 & 0 & -1 \\
    b' & 0 & 0 & 0 & 0 & 1 & -2 & 1 & 0 \\
    c' & 0 & 0 & -1 & 0 & 0 & 1 & -1 & 0 \\
    d' & 0 & 0 & 0 & 1 & -1 & 0 & 0 & 1 \\
\end{bNiceArray}
\]
and
\[
M_2^+=\begin{bNiceArray}[first-row,first-col]{ccccc}
    & a-a' & b-b' & c-c' & d-d' \\
    a-a' & -4 & 2 & 0 & -2 \\
    b-b' & 2 & -4 & 2 & 0 \\
    c-c' & 0 & 2 & 0 & 0 \\
    d-d' & -2 & 0 & 0 & 0 \\
\end{bNiceArray},\,
M_2^-=\begin{bNiceArray}[first-row,first-col]{ccccc}
    & a+a' & b+b' & c+c' & d+d' \\
    a+a' & 0 & 2 & 0 & -2 \\
    b+b' & 2 & -4 & 2 & 0 \\
    c+c' & 0 & 2 & -4 & 0 \\
    d+d' & -2 & 0 & 0 & 4 \\
\end{bNiceArray}.
\]

We find that $\sigma(M_1^+)=0$, $\sigma(M_1^-)=0$, and $e(D)=0$, so $\sigmaeq(U)=0$.
\end{exa}

\begin{exa}[The type B bound is tight]
    We consider now a type B move performed on the following diagram of the knot $5_1$:

\begin{figure}[ht!]
    \def\svgwidth{0,55\columnwidth}
    \centering
    \import{./images/}{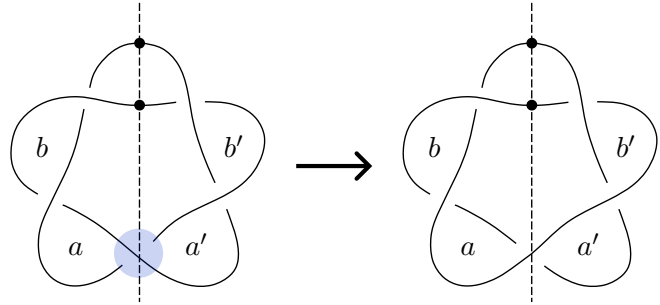}

    \caption{A type B move performed on the highlighted crossing of the diagram of $5_1$.}
    \label{fig:5-1typeB}
\end{figure}

The Goeritz matrices before and after the type B move are
\[
M=\begin{bNiceArray}[first-row,first-col]{ccccc}
    & a & b & a' & b' \\
    a & -2 & 1 & 1 & 0 \\
    b & 1 & -2 & 0 & 0 \\
    a' & 1 & 0 & -2 & 1 \\
    b' & 0 & 0 & 1 & -2 \\
\end{bNiceArray},\,
M_B=\begin{bNiceArray}[first-row,first-col]{ccccc}
    & a & b & a' & b' \\
    a & 0 & 1 & -1 & 0 \\
    b & 1 & -2 & 0 & 0 \\
    a' & -1 & 0 & 0 & 1 \\
    b' & 0 & 0 & 1 & -2 \\
\end{bNiceArray}.
\]
They yield the following matrices for the respective $+1$ and $-1$-eigenspaces:
\[
M^+=\begin{bNiceArray}[first-row,first-col]{ccc}
    & a-a' & b-b' \\
    a-a' & -6 & 2 \\
    b-b' & 2 & -4 \\
\end{bNiceArray},\quad
M^-=\begin{bNiceArray}[first-row,first-col]{ccc}
    & a+a' & b+b' \\
    a+a' & -2 & 2 \\
    b+b' & 2 & -4 \\
\end{bNiceArray}
\]
and
\[
M_B^+=\begin{bNiceArray}[first-row,first-col]{ccc}
    & a-a' & b-b' \\
    a-a' & 2 & 2 \\
    b-b' & 2 & -4 \\
\end{bNiceArray},\quad
M_B^-=\begin{bNiceArray}[first-row,first-col]{ccc}
    & a+a' & b+b' \\
    a+a' & -2 & 2 \\
    b+b' & 2 & -4 \\
\end{bNiceArray}.
\]
The correction term for both diagrams is equal to $4$. This means that the signature before the move is $\sigmaeq(5_1)=-2-(-2)-4=-4$, and after the move it is $\sigmaeq(3_1)=0-(-2)-4=-2$.
\end{exa}

\begin{exa}[The type C bound is tight]
    Continuing with the knot $5_1$, we now perform a type C move on the original diagram:
\begin{figure}[ht!]
    \def\svgwidth{0,55\columnwidth}
    \centering
    \import{./images/}{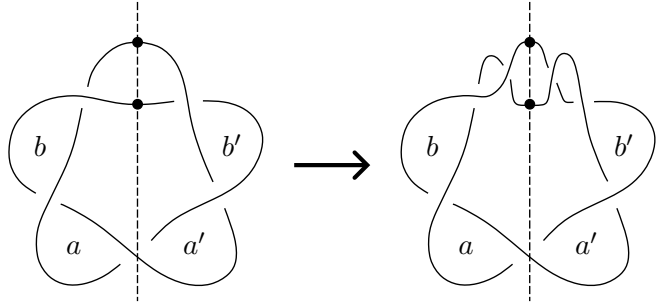}

    \caption{A type C move performed on the diagram of $5_1$.}
    \label{fig:5-1typeC}
\end{figure}
The Goeritz matrix after the type C move is
\[
M_C=\begin{bNiceArray}[first-row,first-col]{ccccccc}
    & a & b & c & a' & b' & c' \\
    a & -2 & 1 & 0 & 1 & 0 & 0 \\
    b & 1 & -2 & 1 & 0 & 0 & 0 \\
    c & 0 & 1 & 0 & 0 & 0 & 0 \\
    a' & 1 & 0 & 0 & -2 & 1 & 0 \\
    b' & 0 & 0 & 0 & 1 & -2 & 1 \\
    c' & 0 & 0 & 0 & 0 & 1 & 0 \\
\end{bNiceArray},
\]
and it gives the following matrices for the respective $+1$ and $-1$-eigenspaces:
\[
M_C^+=\begin{bNiceArray}[first-row,first-col]{cccc}
    & a-a' & b-b' & c-c' \\
    a-a' & -6 & 2 & 0 \\
    b-b' & 2 & -4 & 2 \\
    c-c' & 0 & 2 & 0 \\
\end{bNiceArray},\quad
M_C^-=\begin{bNiceArray}[first-row,first-col]{cccc}
    & a+a' & b+b' & c+c' \\
    a+a' & -2 & 2 & 0 \\
    b+b' & 2 & -4 & 2 \\
    c+c' & 0 & 2 & 0 \\
\end{bNiceArray}.
\]
The correction term after the type C move is now $2$, such that the signature after the move is $-1-(-1)-2=-2$. Note that after the move, we obtain the diagram of the right-handed trefoil.
\end{exa}

\begin{exa}
    Consider the same diagram of $9_{40}$ as before; we can perform a type C move to obtain the diagram given in Figure \ref{fig:9-40typeC}.
\begin{figure}[ht!]
    \def\svgwidth{0,3\columnwidth}
    \centering
    %
    %% Creator: Inkscape 1.2.2 (732a01da63, 2022-12-09), www.inkscape.org
%% PDF/EPS/PS + LaTeX output extension by Johan Engelen, 2010
%% Accompanies image file '9-40typeC.pdf' (pdf, eps, ps)
%%
%% To include the image in your LaTeX document, write
%%   \input{<filename>.pdf_tex}
%%  instead of
%%   \includegraphics{<filename>.pdf}
%% To scale the image, write
%%   \def\svgwidth{<desired width>}
%%   \input{<filename>.pdf_tex}
%%  instead of
%%   \includegraphics[width=<desired width>]{<filename>.pdf}
%%
%% Images with a different path to the parent latex file can
%% be accessed with the `import' package (which may need to be
%% installed) using
%%   \usepackage{import}
%% in the preamble, and then including the image with
%%   \import{<path to file>}{<filename>.pdf_tex}
%% Alternatively, one can specify
%%   \graphicspath{{<path to file>/}}
%% 
%% For more information, please see info/svg-inkscape on CTAN:
%%   http://tug.ctan.org/tex-archive/info/svg-inkscape
%%
\begingroup%
  \makeatletter%
  \providecommand\color[2][]{%
    \errmessage{(Inkscape) Color is used for the text in Inkscape, but the package 'color.sty' is not loaded}%
    \renewcommand\color[2][]{}%
  }%
  \providecommand\transparent[1]{%
    \errmessage{(Inkscape) Transparency is used (non-zero) for the text in Inkscape, but the package 'transparent.sty' is not loaded}%
    \renewcommand\transparent[1]{}%
  }%
  \providecommand\rotatebox[2]{#2}%
  \newcommand*\fsize{\dimexpr\f@size pt\relax}%
  \newcommand*\lineheight[1]{\fontsize{\fsize}{#1\fsize}\selectfont}%
  \ifx\svgwidth\undefined%
    \setlength{\unitlength}{438.18784044bp}%
    \ifx\svgscale\undefined%
      \relax%
    \else%
      \setlength{\unitlength}{\unitlength * \real{\svgscale}}%
    \fi%
  \else%
    \setlength{\unitlength}{\svgwidth}%
  \fi%
  \global\let\svgwidth\undefined%
  \global\let\svgscale\undefined%
  \makeatother%
  \begin{picture}(1,1.17664276)%
    \lineheight{1}%
    \setlength\tabcolsep{0pt}%
    \put(0,0){\includegraphics[width=\unitlength,page=1]{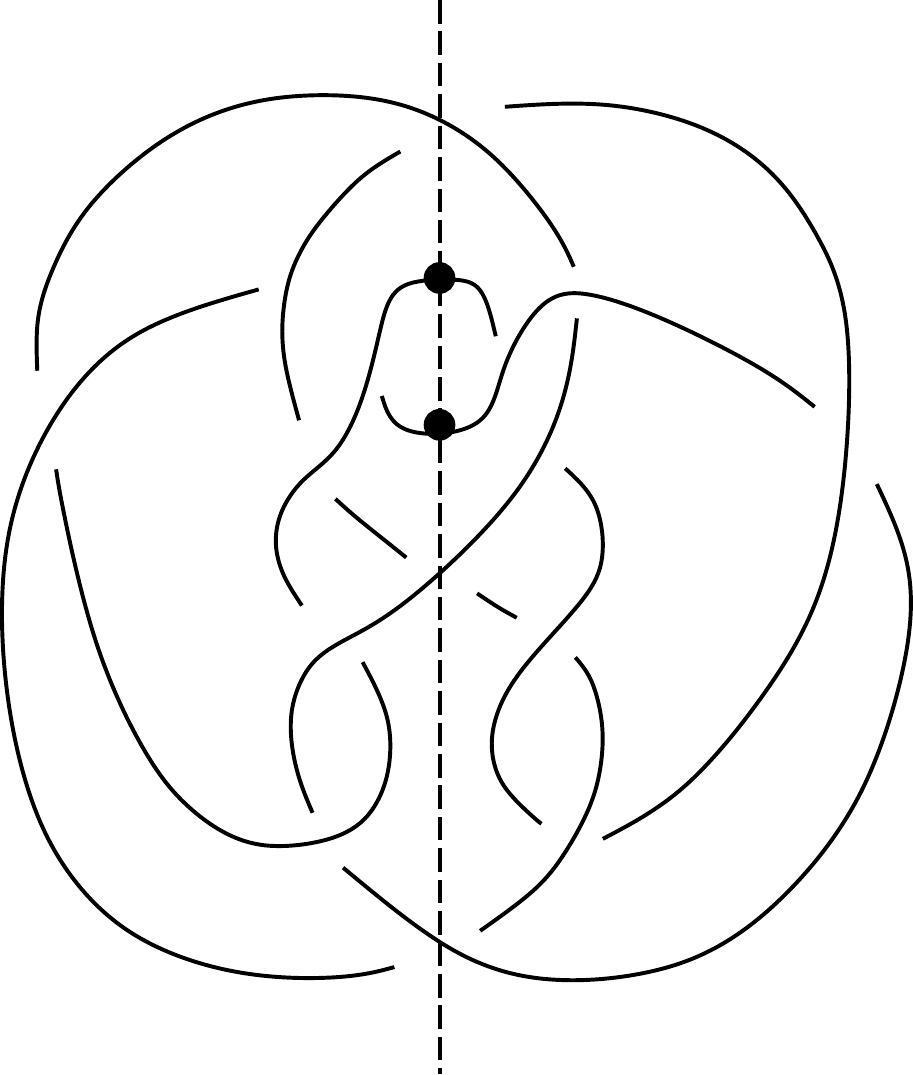}}%
    \put(0.17251957,0.1851126){\makebox(0,0)[lt]{\lineheight{1.25}\smash{\begin{tabular}[t]{l}$a$\end{tabular}}}}%
    \put(0.69626807,0.1851126){\makebox(0,0)[lt]{\lineheight{1.25}\smash{\begin{tabular}[t]{l}$a'$\end{tabular}}}}%
    \put(0.3505255,0.33573329){\makebox(0,0)[lt]{\lineheight{1.25}\smash{\begin{tabular}[t]{l}$b$\end{tabular}}}}%
    \put(0.57303292,0.33573329){\makebox(0,0)[lt]{\lineheight{1.25}\smash{\begin{tabular}[t]{l}$b'$\end{tabular}}}}%
    \put(0.34710231,0.53770157){\makebox(0,0)[lt]{\lineheight{1.25}\smash{\begin{tabular}[t]{l}$c$\end{tabular}}}}%
    \put(0.55934019,0.53427838){\makebox(0,0)[lt]{\lineheight{1.25}\smash{\begin{tabular}[t]{l}$c'$\end{tabular}}}}%
    \put(0.17594276,0.89371343){\makebox(0,0)[lt]{\lineheight{1.25}\smash{\begin{tabular}[t]{l}$d$\end{tabular}}}}%
    \put(0.70653763,0.89371343){\makebox(0,0)[lt]{\lineheight{1.25}\smash{\begin{tabular}[t]{l}$d'$\end{tabular}}}}%
    \put(0,0){\includegraphics[width=\unitlength,page=2]{9-40typeC.pdf}}%
  \end{picture}%
\endgroup%

    \caption{A type C move performed on the diagram of $9_{40}$.}
    \label{fig:9-40typeC}
\end{figure}

    The Goeritz matrix is now given by
    \[
M_C=\begin{bNiceArray}[first-row,first-col]{cccccccccc}
    & a & b & c & d & e & a' & b' & c' & d' & e' \\
    a & -3 & 1 & 0 & 1 & 0 & 1 & 0 & 0 & 0 & 0 \\
    b & 1 & -2 & 1 & 0 & 0 & 0 & 0 & 0 & 0 & 0 \\
    c & 0 & 1 & -1 & 0 & 1 & 0 & 0 & -1 & 0 & 0 \\
    d & 1 & 0 & 0 & -3 & 1 & 0 & 0 & 0 & 1 & 0 \\
    e & 0 & 0 & 1 & 1 & -1 & 0 & 0 & 0 & 0 & 0 \\
    a' & 1 & 0 & 0 & 0 & 0 & -3 & 1 & 0 & 1 & 0 \\
    b' & 0 & 0 & 0 & 0 & 0 & 1 & -2 & 1 & 0 & 0 \\
    c' & 0 & 0 & -1 & 0 & 0 & 0 & 1 & -1 & 0 & 1 \\
    d' & 0 & 0 & 0 & 1 & 0 & 1 & 0 & 0 & -3 & 1 \\
    e' & 0 & 0 & 0 & 0 & 0 & 0 & 0 & 1 & 1 & -1 \\
\end{bNiceArray}
\]
and
\[
M_C^+=\begin{bNiceArray}[first-row,first-col]{cccccc}
    & a-a' & b-b' & c-c' & d-d' & e-e' \\
    a-a' & -8 & 2 & 0 & 2 & 0 \\
    b-b' & 2 & -4 & 2 & 0 & 0 \\
    c-c' & 0 & 2 & 0 & 0 & 2 \\
    d-d' & 2 & 0 & 0 & -8 & 2 \\
    e-e' & 0 & 0 & 2 & 2 & -2 \\
\end{bNiceArray},\]\[
M_C^-=\begin{bNiceArray}[first-row,first-col]{cccccc}
    & a+a' & b+b' & c+c' & d+d' & e+e' \\
    a+a' & -4 & 2 & 0 & 2 & 0 \\
    b+b' & 2 & -4 & 2 & 0 & 0 \\
    c+c' & 0 & 2 & -4 & 0 & 2 \\
    d+d' & 2 & 0 & 0 & -4 & 2 \\
    e+e' & 0 & 0 & 2 & 2 & -2 \\
\end{bNiceArray}.
\]

We get $\sigma(M_C^+)=-3$, $\sigma(M_C^-)=-3$, and $e(D)=-6$, such that the equivariant signature is $6$. Note that in this case, performing a type C move modified the difference of signatures as well as the correction term, but overall the equivariant signature remained the same.
\end{exa}

\nocite{*}
\bibliographystyle{plain}
\bibliography{references}

@article {AB24,
    AUTHOR = {Alfieri, Antonio and Boyle, Keegan},
     TITLE = {Strongly invertible knots, invariant surfaces, and the
              {A}tiyah-{S}inger signature theorem},
   JOURNAL = {Michigan Math. J.},
  FJOURNAL = {Michigan Mathematical Journal},
    VOLUME = {74},
      YEAR = {2024},
    NUMBER = {4},
     PAGES = {845--861},
      ISSN = {0026-2285,1945-2365},
   MRCLASS = {57K10 (20F65 57M60)},
  MRNUMBER = {4792563},
MRREVIEWER = {Melissa\ Zhang},
       DOI = {10.1307/mmj/20226183},
       URL = {https://doi.org/10.1307/mmj/20226183},
}

@article{BC24,
  title={Equivariant unknotting numbers of strongly invertible knots},
  author={Boyle, Keegan and Chen, Wenzhao},
  journal={arXiv preprint arXiv:2412.09797},
  year={2024}
}

@article {BI22,
    AUTHOR = {Boyle, Keegan and Issa, Ahmad},
     TITLE = {Equivariant 4-genera of strongly invertible and periodic
              knots},
   JOURNAL = {J. Topol.},
  FJOURNAL = {Journal of Topology},
    VOLUME = {15},
      YEAR = {2022},
    NUMBER = {3},
     PAGES = {1635--1674},
      ISSN = {1753-8416,1753-8424},
   MRCLASS = {57M60 (57R85)},
  MRNUMBER = {4461855},
MRREVIEWER = {Celso\ M.\ Doria},
       DOI = {10.1112/topo.12254},
       URL = {https://doi.org/10.1112/topo.12254},
}

@book {J50,
    AUTHOR = {Jones, Burton W.},
     TITLE = {The {A}rithmetic {T}heory of {Q}uadratic {F}orms},
    SERIES = {Carus Monograph Series},
    VOLUME = {no. 10},
 PUBLISHER = {Mathematical Association of America, Buffalo, NY},
      YEAR = {1950},
     PAGES = {x+212},
   MRCLASS = {10.0X},
  MRNUMBER = {37321},
MRREVIEWER = {H.\ D.\ Kloosterman},
}

@article {M65,
    AUTHOR = {Murasugi, Kunio},
     TITLE = {On a certain numerical invariant of link types},
   JOURNAL = {Trans. Amer. Math. Soc.},
  FJOURNAL = {Transactions of the American Mathematical Society},
    VOLUME = {117},
      YEAR = {1965},
     PAGES = {387--422},
      ISSN = {0002-9947,1088-6850},
   MRCLASS = {55.20},
  MRNUMBER = {171275},
MRREVIEWER = {R.\ H.\ Fox},
       DOI = {10.2307/1994215},
       URL = {https://doi.org/10.2307/1994215},
}

@article {GL78,
    AUTHOR = {Gordon, Cameron McA. and Litherland, Richard A.},
     TITLE = {On the signature of a link},
   JOURNAL = {Invent. Math.},
  FJOURNAL = {Inventiones Mathematicae},
    VOLUME = {47},
      YEAR = {1978},
    NUMBER = {1},
     PAGES = {53--69},
      ISSN = {0020-9910,1432-1297},
   MRCLASS = {55A25},
  MRNUMBER = {500905},
MRREVIEWER = {Wilbur\ Whitten},
       DOI = {10.1007/BF01609479},
       URL = {https://doi.org/10.1007/BF01609479},
}

@article {AS68,
    AUTHOR = {Atiyah, Michael F. and Singer, Isadore M.},
     TITLE = {The index of elliptic operators. {III}},
   JOURNAL = {Ann. of Math. (2)},
  FJOURNAL = {Annals of Mathematics. Second Series},
    VOLUME = {87},
      YEAR = {1968},
     PAGES = {546--604},
      ISSN = {0003-486X},
   MRCLASS = {57.50},
  MRNUMBER = {236952},
MRREVIEWER = {F.\ Hirzebruch},
       DOI = {10.2307/1970717},
       URL = {https://doi.org/10.2307/1970717},
}

@book {HJ13,
    AUTHOR = {Horn, Roger A. and Johnson, Charles R.},
     TITLE = {Matrix analysis},
   EDITION = {Second},
 PUBLISHER = {Cambridge University Press, Cambridge},
      YEAR = {2013},
     PAGES = {xviii+643},
      ISBN = {978-0-521-54823-6},
   MRCLASS = {15-01},
  MRNUMBER = {2978290},
MRREVIEWER = {Mohammad\ Sal\ Moslehian},
}

@article{K97,
  title={Problems in low-dimensional topology},
  author={Kirby, Rob},
  journal={Proceedings 1993 Georgia International Topology Confence, Geometric Topology},
  volume={2},
  year={1997},
  publisher={AMS. International Press}
}

@article {NS87,
    AUTHOR = {Nakanishi, Yasutaka and Suzuki, Shin'ichi},
     TITLE = {On {F}ox's congruence classes of knots},
   JOURNAL = {Osaka J. Math.},
  FJOURNAL = {Osaka Journal of Mathematics},
    VOLUME = {24},
      YEAR = {1987},
    NUMBER = {1},
     PAGES = {217--225},
      ISSN = {0030-6126},
   MRCLASS = {57M25},
  MRNUMBER = {881755},
MRREVIEWER = {Bradd\ Evans\ Clark},
       URL = {http://projecteuclid.org/euclid.ojm/1200779941},
}

@article {DMS23,
    AUTHOR = {Dai, Irving and Mallick, Abhishek and Stoffregen, Matthew},
     TITLE = {Equivariant knots and knot {F}loer homology},
   JOURNAL = {J. Topol.},
  FJOURNAL = {Journal of Topology},
    VOLUME = {16},
      YEAR = {2023},
    NUMBER = {3},
     PAGES = {1167--1236},
      ISSN = {1753-8416,1753-8424},
   MRCLASS = {57K18 (57R55 57R58)},
  MRNUMBER = {4638003},
MRREVIEWER = {William\ Rushworth},
       DOI = {10.1112/topo.12312},
       URL = {https://doi.org/10.1112/topo.12312},
}

\bigskip
\footnotesize

\textit{E-mail address}: \texttt{zampa.sarah@renyi.hu}

\bigskip
\textsc{Department of Geometry and Algebra, Institute of Mathematics, Budapest University of Technology and Economics, Műegyetem rkp. 3., H-1111 Budapest, Hungary}\par
\textsc{Department of Algebraic Geometry and Differential Topology, HUN-REN Alfréd R\'{e}nyi Institute of Mathematics, Reáltanoda u. 13-15., H-1053 Budapest, Hungary}

\end{document}